\newtheorem{theorem}{Theorem}[section]
\newtheorem{lemma}[theorem]{Lemma}
\newtheorem{corollary}[theorem]{Corollary}
\newtheorem{proposition}[theorem]{Proposition}
\newtheorem{definition}{Definition}
\newtheorem{remark}{Remark}
\newtheorem{example}{Example}
\DeclareMathOperator{\ad}{ad}
\DeclareMathOperator{\Z}{Z}
\DeclareMathOperator{\Nil}{Nil}
\DeclareMathOperator{\Rad}{Rad}
\DeclareMathOperator{\Asoc}{Asoc}
\DeclareMathOperator{\Soc}{Soc}
\DeclareMathOperator{\Ssoc}{Ssoc}
\DeclareMathOperator{\Sub}{Sub}
\DeclareMathOperator{\Pow}{Pow}
\DeclareMathOperator{\degr}{degr}
\DeclareMathOperator{\id}{id}
\begin{document}
\title{Lie algebras with a finite number of ideals}
\author{Pilar Benito and Jorge Rold\'an-L\'opez}
\address{Dpto. Matem\'aticas y Computaci\'on,
  Universidad de La Rioja, 26006,
  Logro\~no, La Rioja, Spain}
\email{pilar.benito@unirioja.es and jorge.roldanl@unirioja.es}
\date{\today}
\thanks{The authors are partially funded by grant MTM2017-83506-C2-1-P of Ministerio de Econom\'ia, Industria y Competititividad (Spain). The second-named author is supported by a predoctoral research grant of the Universidad of La Rioja.}
\keywords{Lie algebra, ideal, Frattini subalgebra, lattice, Hasse diagram}
\subjclass[2010]{17B05, 03G10, 06D99.}
\begin{abstract}
  In this paper we focus on the structure of the variety of Lie algebras with a finite number of ideals and their graph representations using Hasse diagrams. The large number of necessary conditions on the algebraic structure of this type of algebras leads to the explicit description of those algebras in the variety with trivial Frattini subalgebra. To illustrate our results, we have included and discussed lots of examples throughout the paper.
\end{abstract}
\maketitle

\section {Introduction}

The set of ideals of a Lie algebra $L$ is a \emph{poset} (partially ordered set) ordered by inclusion. In the set of ideals of $L$, every two elements have supremum (also called least upper bound or join) and infimum (greatest lower bound or meet). From their definitions, it is easily checked that the supremum and infimum are unique. The supremum of two ideals is given by the sum of the ideals and the infimum is just the intersection. So, the poset of ideals of $L$ is a lattice (see~\cite{Gr11} for a formal definition). It is worth pointing out that lattices of ideals of Lie algebras are complete lattices because every subset of ideals has a join  and a meet. Moreover, this lattices also are bounded: the zero ideal is the smallest element and the total algebra is the largest element. Many fundamental properties of Lie algebras can be interpreted as facts about ideal lattices. For instance, lattices of ideals of Lie algebras that have a nondegenerate invariant bilinear form are \emph{selfdual} as it is pointed out in \cite{HoKe86}*{Section 1}.

A Hasse diagram represents a finite poset in the form of a drawing of its transitive reduction. For a poset $(S, \leq)$, the Hasse diagram represents the elements of $S$ by nodes (small black circles in our pictures). The nodes representing the elements $x$ and $y$ of $S$ are connected by a straight line or segment that goes upward from $x$ to $y$ whenever $y$ \emph{covers} $x$, that is, whenever $x < y$ and there is no $z$ such that $x < z < y$. These segments may cross each other but must not touch any vertices other than their endpoints. We have drawn some examples of these diagrams in Figure \ref{fig:hasseExamples}.

\begin{figure}
  \centering
  \includegraphics{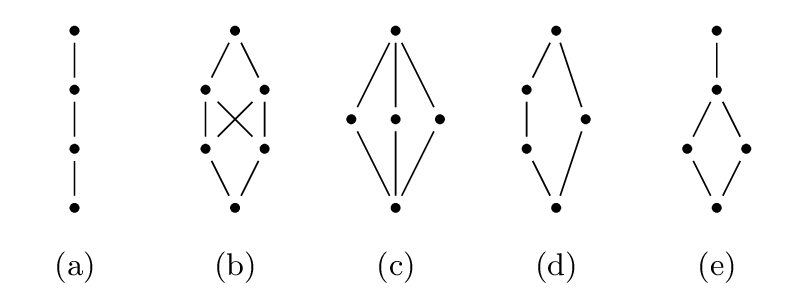}
  \caption{Examples of different Hasse digrams.}
  \label{fig:hasseExamples}
\end{figure}

According to~\cite{Gr11}*{Lemma 1, Section 1.4}, from a Hasse diagram we can recapture the relation $\leq $ by noting that $x<y$ holds iff there exists a sequence of elements $c_0,\dots, c_n$ such that $x=c_0$, $y=c_n$ and $c_{i+1}$ covers $c_i$. Hence, the Hasse diagram of a finite poset determines the poset up to isomorphisms. A finite lattice is a poset attached to a Hasse diagram for which every pair of nodes has a supremum and an infimum. The above remark implies that for each of the diagrams in Figure \ref{fig:hasseExamples}, the corresponding poset is a lattice with the exception of diagram (b).

The aim of this work is to bring together the algebraic and lattice properties of finite dimensional Lie algebras with a finite number of ideals. Starting out with semisimple Lie algebras as main examples of this class of algebras, we will prove that finite lattices that can occur as lattices of ideals of Lie algebras satisfy distributive laws. Hence, diagrams (a) and (e) in Figure \ref{fig:hasseExamples} are the only ones that could be (and in fact they are) lattices of ideals of Lie algebras.

The paper is organized into four sections from number $2$. In Section~2 we set out the basic terminology and facts on lattices according to Gr\"{a}tzer \cite{Gr11}. In this section we establish that finite lattices of ideals are distributive lattices. We also provide alternative characterizations of
the variety of Lie algebras with boolean lattices. Section~3 includes several necessary conditions of the algebraic structure of the class of Lie algebras with a finite number of ideals. The results in this section will be applied in the final Section 4 to provide the complete description of the subclass of algebras with trivial Frattini subalgebra. This section ended with a detailed outline of Lie algebras with trivial Frattini subalgebra up to 10 ideals and the corresponding Hasse diagrams.

Throughout the study we will follow ideas and techniques given in \cite{Be95}. Vector spaces along the paper are finite-dimensional over any field $\mathbb{F}$ of characteristic zero. Direct sums as vector spaces will be denoted by $V\oplus W$. Basic definitions and structure results on Lie algebras  can be found in \cite{Hu72} or \cite{Ja62}.

\section{Lattices, finite lattices of ideals and Hasse diagrams}

The algebraic structure of a Lie algebra imposes strong conditions on its lattice of ideals. Unfortunately, the ideal lattice structure does not always determine the Lie algebra in a unique way. For instance, the one-dimensional Lie algebra and every simple Lie algebra have the same lattice of ideals consisting of a 2-element chain. This elementary example shows that there exist non isomorphic Lie algebras with the same lattice of ideals. Nevertheless, the lattice structure of a Lie algebra does approach to the algebraic structure. In fact, it is quite natural to apply theoretical lattice ideas such us complementation, modularity or distributivity in the study of the ideals of a Lie algebra to determine structural properties of the algebra. In this section we will illustrate this assertion.

From~\cite{Gr11}*{Section 1.8, Chapter I}, starting out with a poset $(S, \leq)$, we introduce the notion of infimum and supremum of any subset of $S$ as it is usually done for (infinite) sets of real numbers. The poset $(S, \leq)$ is a lattice if $a\vee b=\sup\{a,c\}$ (join) and $a\wedge b=\inf\{a,b\}$ (meet) exist for all $a,b\in S$ and they are unique. If the lattice has largest (usually denoted as $1$) and smallest (denoted as $0$) elements, it is called bounded. We say that the lattice is complete if $\bigvee H$ and $\bigwedge H$ exist for any subset $H\subseteq S$.

As we have mentioned in the introduction, the poset of the ideals of a Lie algebra $L$ is a bounded and complete lattice where $L$ is the least upper bound and $\{0\}$ the greatest lower bound. In addition, for any three ideals $A,B,C$ of $L$ such that $A\subseteq B$, we have the identity
\begin{equation}\label{eq:modular}
  B\cap(A+C)=A+(B\cap C).
\end{equation}
In a general lattice, (\ref{eq:modular}) can be rewritten as,
$$
  a<b \quad \textrm{implies
  }\quad \inf\{b,\sup\{a,c\}\}=\inf\{b,\sup\{a,c\}\}$$ for arbitrary $a,b,c$  elements of the lattice. This identity is known as \emph{Modular Law} and it is the identity that defines modular lattices. So, lattices of ideals of Lie algebras are modular.

The following definition introduces the different classes of lattices that will appear in the paper. Any element that covers $0$ in a lattice is an \emph{atom}.

\begin{definition}
  Let $\mathcal{L}$ be a bounded lattice attached to the poset $(S,\leq)$ with join and meet denoted by $\vee$ and $\wedge$ respectively. Then:
  \begin{itemize}
    \item[\emph{(a)}] $\mathcal{L}$ is a complemented lattice if each element has a complement, that is, for a given element $a$, there is an element $b$ such that $a\vee b=1$ and $a\wedge b=0$.
    \item[\emph{(b)}] $\mathcal{L}$ is a distributive lattice if $\mathcal{L}$ satisfies either of the following equivalent distributive laws:
          \begin{itemize}
            \item[] \emph{(D1)} $\quad a\vee(b\wedge c)=(a\vee b)\wedge(a\vee c)$
            \item[] \emph{(D2)} $\quad a\wedge(b\vee c)=(a\wedge b)\vee(a\wedge c)$
          \end{itemize}
    \item[\emph{(c)}] $\mathcal{L}$ is a boolean lattice if it is complemented and distributive.
    \item[\emph{(d)}] $\mathcal{L}$ satisfies the \emph{diamond property} \emph{(DP)} if for two distinct atoms $a,b$ there exists an element $c$ for which $\{0,a,b,c,a\vee b\}$ is the $M_3$ lattice in Figure~\ref{fig:nomodular}.
  \end{itemize}
\end{definition}

The two typical examples of nondistributive lattices are the pentagon $N_5$ and the diamond $M_3$ whose Hasse diagrams are shown in Figure~\ref{fig:nomodular}. 
The following theorem characterizes distributivite and modular lattices by means of pentagons and diamonds.

\begin{figure}[h]
  \centering
  \includegraphics{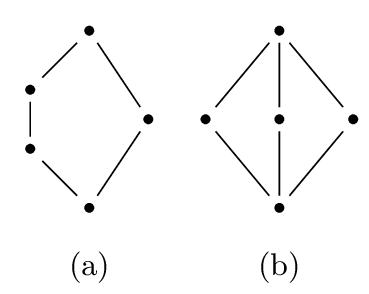}
  \caption{(a) is the $N_5$ non modular lattice, and (b) the $M_3$ nondistributive lattice.}
  \label{fig:nomodular}
\end{figure}

\begin{theorem}\label{distributive-modular}\emph{\cite{Gr11}*{Theorem 102}} Every distributive lattice is modular. Moreover:
  \begin{itemize}
    \item[\emph{(a)}]A lattice is modular iff does not contain a pentagon.
    \item[\emph{(b)}]A lattice is distributive iff does not contain a pentagon or a diamond. \hfill $\square$
  \end{itemize}
\end{theorem}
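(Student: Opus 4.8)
The plan is to reduce both equivalences to the two small non-examples $N_5$ and $M_3$ of Figure~\ref{fig:nomodular}, so that ``contains a pentagon/diamond'' always means ``contains a copy of $N_5$ resp.\ $M_3$ as a sublattice''. That every distributive lattice is modular is immediate: if $a\le b$ then $a\wedge b=a$, so (D2) yields $b\wedge(a\vee c)=(b\wedge a)\vee(b\wedge c)=a\vee(b\wedge c)$, which is precisely the Modular Law read in an abstract lattice (compare~\eqref{eq:modular}).

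For the ``only if'' halves of (a) and (b) I would note that the modular law and the distributive laws are identities, hence inherited by every sublattice; so it suffices to check by hand that $N_5$ is not modular and that $M_3$, although modular, is not distributive. Both checks are one line: in $N_5=\{0,a,b,c,1\}$ with $0<a<b<1$ and $c$ incomparable to $a,b$ one has $b\wedge(a\vee c)=b\wedge 1=b\ne a=a\vee 0=a\vee(b\wedge c)$; in $M_3=\{0,a,b,c,1\}$ one has $a\wedge(b\vee c)=a\ne 0=(a\wedge b)\vee(a\wedge c)$. Consequently a modular lattice contains no $N_5$ and a distributive lattice contains neither $N_5$ nor $M_3$.

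For the converse in (a), suppose $\mathcal L$ is not modular, so there are $a\le b$ and $c$ with $a\vee(b\wedge c)\ne b\wedge(a\vee c)$. Because $a\le b$ forces $a\vee(b\wedge c)\le b\wedge(a\vee c)$ in any lattice, this inequality is in fact strict. Put $u=b\wedge c$, $v=a\vee c$, $p=a\vee(b\wedge c)$, $q=b\wedge(a\vee c)$. A short computation with the absorption laws gives $u\le c\le v$, $u\le p<q\le v$, $c\wedge p=c\wedge q=u$ and $c\vee p=c\vee q=v$, and feeding $p<q$ back in rules out every coincidence among $u,p,q,c,v$ (so in particular $c$ is incomparable to $p$ and to $q$). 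Hence $\{u,p,q,c,v\}$ is a sublattice isomorphic to $N_5$, and $\mathcal L$ contains a pentagon.

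For the converse in (b), assume $\mathcal L$ contains neither a pentagon nor a diamond; by (a) it is modular, and I must deduce distributivity. Arguing by contradiction, pick $a,b,c$ with $(a\wedge b)\vee(a\wedge c)<a\wedge(b\vee c)$, and follow the classical recipe: set $d=(a\wedge b)\vee(b\wedge c)\vee(c\wedge a)$ and $e=(a\vee b)\wedge(b\vee c)\wedge(c\vee a)$, then $a_1=d\vee(a\wedge e)$, $b_1=d\vee(b\wedge e)$, $c_1=d\vee(c\wedge e)$. Using modularity repeatedly, the pairwise meets $a_1\wedge b_1$, $b_1\wedge c_1$, $c_1\wedge a_1$ all collapse to $d$, the pairwise joins all rise to $e$, and $d<a_1,b_1,c_1<e$ with the three middle elements pairwise distinct --- this last point being exactly where the failure of distributivity is used. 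Then $\{d,a_1,b_1,c_1,e\}\cong M_3$, contradicting the hypothesis, so $\mathcal L$ is distributive. I expect this diamond construction to be the real obstacle: the modular-law bookkeeping that forces the three meets down to $d$, the three joins up to $e$, and keeps $a_1,b_1,c_1$ genuinely different (so that one gets $M_3$ and not a degenerate collapse) is the only part that is not an essentially two-line verification.
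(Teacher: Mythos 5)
The paper does not prove this statement at all: Theorem~\ref{distributive-modular} is quoted from Gr\"atzer (\cite{Gr11}, Theorem 102) and closed with a $\square$, so there is no internal argument to compare yours against --- only the cited source. What you have reconstructed is the classical Dedekind--Birkhoff forbidden-sublattice proof, which is exactly what that source contains, and it is correct as far as it goes. The easy direction (distributive $\Rightarrow$ modular via (D2) with $a\wedge b=a$), the heredity of identities to sublattices, the one-line failures of modularity in $N_5$ and of distributivity in $M_3$, and the explicit pentagon $\{u,p,q,c,v\}$ with $u=b\wedge c$, $v=a\vee c$, $p=a\vee(b\wedge c)$, $q=b\wedge(a\vee c)$ all check out: one verifies $c\wedge p=c\wedge q=u$, $c\vee p=c\vee q=v$, and that any coincidence among the five elements forces $p=q$, contradicting non-modularity. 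The only part that is a sketch rather than a proof is the converse of (b): you set up the median elements $d$, $e$, $a_1$, $b_1$, $c_1$ correctly, but the modular-law computations establishing $a_1\wedge b_1=b_1\wedge c_1=c_1\wedge a_1=d$, the dual identities for the joins, and the fact that failure of distributivity forces $d<e$ (whence the three middle elements are automatically distinct, since $a_1=b_1$ would give $d=e$) are asserted rather than carried out. You flag this honestly and it is routine, but a referee would want those few lines written down; with them supplied, your argument is a complete and standard proof of the quoted result.
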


In a totally ordered set $(S,\leq)$, any pair of elements are comparable. Thus the infimum and supremum of any pair of elements of $S$ exist and therefore $S$ is a lattice. If $S$ is a set of $n\geq 1$ elements, we will say that $S$ is an $n$-element chain lattice ($n$-chain for short). According to the introduction, a finite lattice can be represented by the Hasse diagram of its underlying poset. In this way, the Hasse diagram for any $n$-chain is given in Figure~\ref{fig:nHase} (the diagram has exactly $n$ nodes). A general lattice is said to be of length $k$, where $k$ is a natural number ($k=0$ is possible), if there is a $(k+1)$-chain sublattice and all of its chain sublattices have a number of elements $\leq k+1$. Note that the length of an $n$-chain is $n-1$, just the number of ``jumps''.

\begin{figure}[h]
  \centering
  \includegraphics{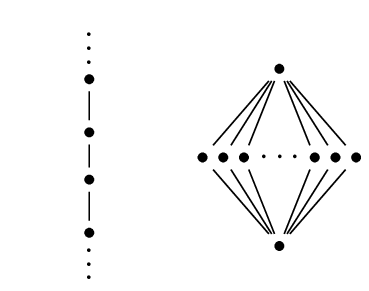}
  \caption{On the left the $n$-chain, on the right the $M_n$ lattice.}
  \label{fig:nHase}
\end{figure}

The diamond lattice $M_3$ generalizes to $M_n$ which is the $2$-length lattice with $n\geq 3$ atoms. The Hasse diagram of $M_n$ is given in Figure~\ref{fig:nHase}. The set of subspaces, $\Sub \mathbb{F}^n$, of the $n$-dimensional vector space $\mathbb{F}^n$ and the power set $\Pow A$, i.e. the set of all subsets of the set $A$, are other examples of lattices. Both $\Sub \mathbb{F}^n$ and $\Pow A$ are posets ordered by inclusion. The former is a lattice with meet and join given by intersection and sum and the latter under intersection and union.

The following examples relate the set of ideals of some varieties of Lie algebras and the previous notions and examples of lattices. From now on, $\mathcal{L}_{\id}(L)$ will denote the lattice of ideals of a Lie algebra $L$ with (bracket) product $[x,y]$.

\begin{example} \emph{$n$-chain lattice $C_n (n\geq 1)$:} This lattice is distributive and noncomplemented if $n\geq 3$. The trivial Lie algebra is the only algebra whose lattice of ideals is $C_1$. The variety of Lie algebras that have $C_2$ as lattice of ideals are the set of simple Lie algebras plus the $1$-dimensional algebra. For any $n\geq 3$, Lie algebras whose lattice of ideals is an $n$-chain have been fully characterized in~\emph{\cite{Be92_2}}. A complete classification is given for solvable Lie algebras over algebraically closed fields.
\end{example}

\begin{example}\label{n-subspace} \emph{$n$-subspace lattice $\Sub\ \mathbb{F}^n$ ($n\geq 0$):} This is a complemented and modular lattice of length $n$ that satisfies the property \emph{(DP)}. This lattice is not finite and nondistributive if $n\geq 2$. The variety of Lie algebras whose lattice of ideals is isomorphic to $\Sub \mathbb{F}^n$ for some $n$, is the variety of finite dimensional abelian Lie algebras ($L^2=[L,L]=0$): Any algebra $L$ such that $\mathcal{L}_{\id}(L)$ is isomorphic to $Sub\ \mathbb{F}^n$ decomposes into the sum of $n$ minimal ideals $L=I_1\oplus \dots \oplus I_n$. The ideals in the decomposition are abelian because of the \emph{(DP)}-property, and therefore $L^2=0$.
\end{example}

\begin{example}\label{Mm} \emph{$M_n$ lattice ($n\geq 3$):} This lattice has length $2$ and it is complemented, modular and satisfies the \emph{(DP)}-property, so it is nondistributive. There are no Lie algebras with lattice of ideals isomorphic to $M_n$: If $\mathcal{L}_{\id}(L)$ is isomorphic to $M_n$, \emph{(DP)}-property implies $L^2=0$, so $L$ is a $2$-dimensional abelian Lie algebra and therefore $\mathcal{L}_{\id}(L)$ is not a finite lattice, a contradiction.
\end{example}

For a finite set $A$, the Hasse diagram of the lattice of $\Pow A=2^A$ has $2^{\mid A\mid}$ nodes. Note that a line joints two nodes whenever the corresponding subsets differ in a single element. In this way we arrive at the hypercube lattice or $n$-cube lattice $Q_n$, the lattice of subsets of the set $\{1,2,\dots, n\}$. The $0$-cube $Q_0$ is the lattice of subsets of the empty set. We point out the following result about $n$-cube lattices:

\begin{theorem}\label{boolean}\emph{\cite{Gr11}*{Corollaries 109 and 110}} Let $\mathcal{L}$ be a finite lattice. Then:
  \begin{itemize}
    \item[\emph{(a)}]$\mathcal{L}$ is distributive iff $\mathcal{L}$ is isomorphic to a sublattice of the lattice of subsets of some finite set.
    \item[\emph{(b)}]$\mathcal{L}$ is boolean iff $\mathcal{L}$ is isomorphic to $Q_n$ for some $n\geq 0$. \hfill $\square$
  \end{itemize}
\end{theorem}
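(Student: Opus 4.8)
This is the Birkhoff representation theorem for finite distributive lattices together with its specialisation to boolean lattices, and the plan I would follow is this.

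\emph{Part (a), the substantial direction.} Assume $\mathcal{L}$ is a finite distributive lattice. Call $p\in\mathcal{L}$ \emph{join-irreducible} if $p\neq 0$ and $p=a\vee b$ forces $p=a$ or $p=b$, and let $J$ be the finite set of join-irreducible elements. I would define $\varphi\colon\mathcal{L}\to\Pow J$ by $\varphi(a)=\{p\in J:p\leq a\}$ and prove it is an injective lattice homomorphism, so that $\mathcal{L}$ is isomorphic to the sublattice $\varphi(\mathcal{L})$ of $\Pow J$. That $\varphi$ preserves meets holds in any lattice, since $p\leq a\wedge b$ iff $p\leq a$ and $p\leq b$. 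That $\varphi$ preserves joins is the one place distributivity enters: the inclusion $\varphi(a)\cup\varphi(b)\subseteq\varphi(a\vee b)$ is clear, and if $p\in J$ with $p\leq a\vee b$ then $p=p\wedge(a\vee b)=(p\wedge a)\vee(p\wedge b)$, so join-irreducibility gives $p\leq a$ or $p\leq b$. For injectivity I would first show, by a minimal-counterexample argument (this is where finiteness is used), that every $a\in\mathcal{L}$ equals $\bigvee\varphi(a)$: if $a$ is $0$ or join-irreducible this is immediate, and otherwise $a=b\vee c$ with $b,c<a$, to which induction applies. Granting this, if $a\neq b$, say $a\not\leq b$; since $a=\bigvee\varphi(a)$ not every element of $\varphi(a)$ can lie below $b$, so $\varphi(a)\neq\varphi(b)$.

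\emph{Part (a), the converse.} A power set $\Pow X$ ordered by inclusion, with $\cup$ and $\cap$ as join and meet, satisfies (D1)--(D2) by elementary set theory; since the distributive laws are identities they are inherited by every sublattice, so any lattice isomorphic to a sublattice of some $\Pow X$ is distributive.

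\emph{Part (b).} Suppose $\mathcal{L}$ is boolean. A short argument shows that in a complemented distributive lattice the join-irreducible elements are exactly the atoms: if a join-irreducible $p$ covers $q>0$, pick a complement $q'$ of $q$ and use $p=p\wedge(q\vee q')=q\vee(p\wedge q')$ to force $p\leq q'$, whence $q\leq q\wedge q'=0$, a contradiction. Now take $X$ to be the set of atoms of $\mathcal{L}$ and let $\psi\colon\mathcal{L}\to\Pow X$, $\psi(a)=\{p\in X:p\leq a\}$. By the computations in part (a), $\psi$ is a lattice homomorphism, and by the previous paragraph combined with the identity $a=\bigvee\varphi(a)$ from part (a), every $a$ is the join of the atoms below it, i.e. $a=\bigvee\psi(a)$; in particular $\psi$ is injective. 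For surjectivity, given $S\subseteq X$ put $a=\bigvee S$; then $S\subseteq\psi(a)$, and if an atom $q$ lies below $a$ then $q=q\wedge\bigvee_{p\in S}p=\bigvee_{p\in S}(q\wedge p)$ forces $q\wedge p=q$ for some $p$, i.e. $q=p\in S$. Hence $\psi$ is an isomorphism and $\mathcal{L}\cong\Pow X\cong Q_n$ with $n=|X|$. Conversely $Q_n=\Pow\{1,\dots,n\}$ is distributive by set theory and complemented by set-complementation, hence boolean.

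\emph{Where the difficulty lies.} Everything reduces to two facts about finite lattices: that every element is the join of the join-irreducible elements below it (the only genuinely ``finite'' step, proved by well-founded induction), and that in the distributive case a join-irreducible $p\leq a\vee b$ must lie below $a$ or below $b$. Once these are in hand, both parts are routine bookkeeping with the lattice operations and with the Boolean algebra of subsets.
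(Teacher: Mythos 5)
Your proof is essentially correct, but it is worth noting that the paper does not prove this statement at all: Theorem~\ref{boolean} is quoted verbatim from Gr\"atzer (Corollaries 109 and 110) and closed with a $\square$, so there is nothing in the source to compare against line by line. What you have supplied is the standard Birkhoff representation argument, and it holds up. The map $\varphi(a)=\{p\in J: p\leq a\}$ into the power set of the join-irreducibles preserves meets trivially, preserves joins exactly where (D2) is invoked, and is injective because every element of a finite lattice is the join of the join-irreducibles below it (your well-founded induction, with the convention $\bigvee\emptyset=0$, which a finite lattice supports). Two small points of hygiene: in part (b) your sentence ``if a join-irreducible $p$ covers $q>0$'' should read ``if $p$ is join-irreducible and not an atom, choose $q$ with $0<q<p$'' --- the covering relation plays no role, only the strict inequalities, and the computation $p=q\vee(p\wedge q')$ then forces $p\leq q'$ and $q=0$ as you say; and in the surjectivity argument the identity $q\wedge\bigvee_{p\in S}p=\bigvee_{p\in S}(q\wedge p)$ is the distributive law extended to finite joins, which you should flag as following from (D2) by induction on $|S|$. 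With those clarifications the proof is complete and self-contained, which is arguably a service the paper does not provide; the trade-off is length, since the paper's intent is only to import the classification of finite boolean lattices as the hypercubes $Q_n$ for use in Lemma~\ref{completely-reducible} and Corollary~\ref{cor-booleano}.
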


\begin{example} \emph{$n$-cube lattice $Q_n\, (n\geq 0)$:}  According to Corollary \ref{cor-booleano}, the reductive Lie algebras with centre of dimension up to one is the variety of Lie algebras with hypercube lattices. In Figure \ref{fig:n-chain} we have drawn the $n$-cube lattices of ideals of either a semisimple Lie algebra with $n=1,2,3,4,5$ simple components or $L=S\oplus \mathbb{F}z$, where $\mathbb{F}z$ is the centre of $L$ and $S$ is a semisimple ideal with $n-1=0,1,2,3,4$ simple components. The lattices $Q_4$ and $Q_5$ are called tesseract and penteract respectively.

  \begin{figure}
    \centering
    \includegraphics[width=\textwidth]{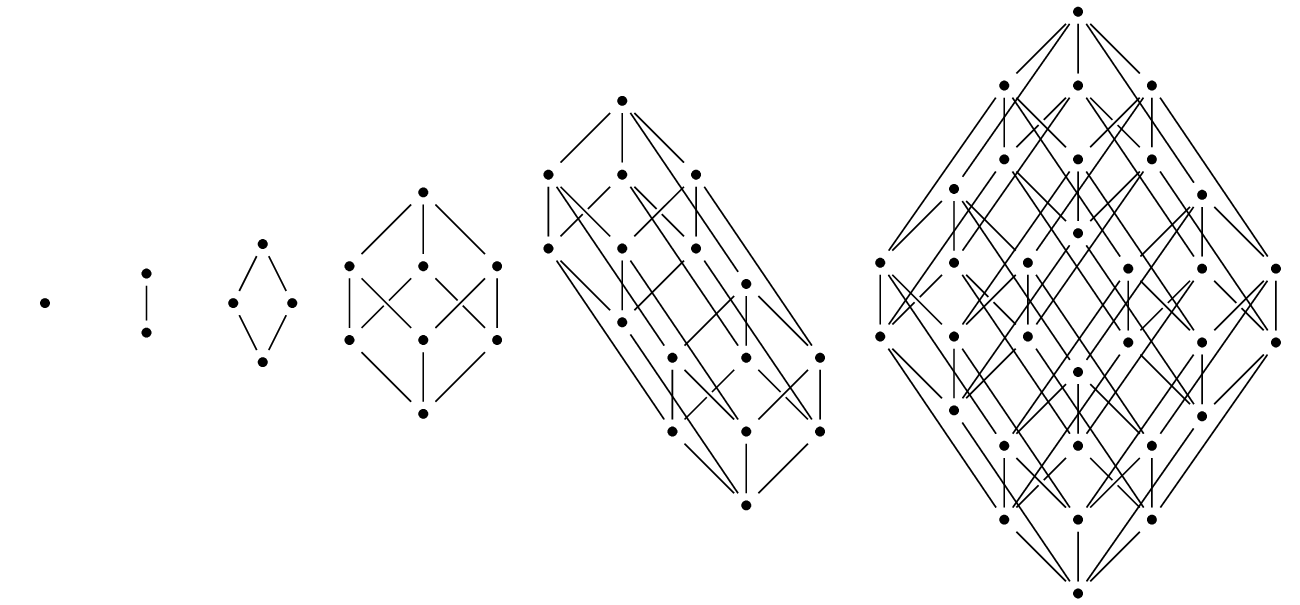}
    \caption{$n$-cube lattices from $n=0$ (left) to $n=5$ (right).}
    \label{fig:n-chain}
  \end{figure}
\end{example}

All the lattices in previous examples are selfdual. In the case of finite lattices, the dualization turns the Hasse diagram upside down.

For a given vector space $M$ and any subset $\Omega$ of linear maps in $M$, the set of subspaces of $M$ which are invariant under $\Omega$ is a lattice where join and meet are given by the sum and intersection of invariant subspaces. We will denote this lattice as $\Sub_\Omega M$. The two subspaces $A,B \in \Sub_\Omega M$ are $\Omega$-isomorphic if there is a bijective map $\varphi\colon A\to B$ such that $\varphi(f(a))=f(\varphi(a))$ for any $a\in A$ and any $f\in \Omega$.

\begin{lemma}\label{completely-reducible}
  Let $\Omega$ be a Lie algebra of linear transformations in a vector space $M$ and let $\Sub_\Omega M$ be the lattice of $\Omega$-invariant subspaces of $M$. Then, the following assertions are equivalent:
  \begin{itemize}
    \item [\emph{(a)}] $\Sub_\Omega M$ is a finite and boolean lattice.
    \item [\emph{(b)}] $\Sub_\Omega M$ is a finite and complemented lattice.
    \item [\emph{(c)}] $\Omega$ is completely reducible in $M$ and $\Sub_\Omega M$ has no irreducible $\Omega$-isomorphic elements.
    \item [\emph{(d)}] $\Omega=\Omega_1\oplus A$, where $\Omega_1$ is a semisimple ideal of $\Omega$, $A$ is the centre, the elements of $A$ are semisimple, i.e. the minimum polynomial of any linear map of $A$ is the product of relatively prime irreducible polynomials, and $\Sub_\Omega M$ has no irreducible $\Omega$-isomorphic elements.
  \end{itemize}

  \noindent If any of these equivalent conditions hold, $M$ has a unique decomposition $M=M_1\oplus\dots \oplus M_r$, where $M_j\in \Sub_\Omega M$ are irreducible and no $\Omega$-isomorphic subspaces. In particular,$$
    \Sub_\Omega M=\{M_{i_1}\oplus\dots \oplus M_{i_k}: 1\leq i_1<\dots <i_k\leq r\}\cup\{0\},
  $$and $\Sub_\Omega M$ is isomorphic to the $r$-cube lattice $Q_r$.
\end{lemma}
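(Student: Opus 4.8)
The plan is to establish the cycle of implications (a) $\Rightarrow$ (b) $\Rightarrow$ (c) $\Rightarrow$ (d) $\Rightarrow$ (a), and then to extract the concluding structural statement from the arguments produced along the way. The implication (a) $\Rightarrow$ (b) is immediate, since by definition a boolean lattice is complemented.

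For (b) $\Rightarrow$ (c) I would use the standard dictionary between submodule lattices and semisimplicity: saying that $\Sub_\Omega M$ is complemented is exactly saying that every $\Omega$-invariant subspace of $M$ has an $\Omega$-invariant complement, i.e. that $\Omega$ acts completely reducibly on $M$. Write $M = M_1 \oplus \dots \oplus M_r$ with each $M_i$ an irreducible $\Omega$-submodule. The key observation is that finiteness of $\Sub_\Omega M$ rules out $\Omega$-isomorphic irreducible constituents: if $\varphi \colon M_i \to M_j$ were an $\Omega$-isomorphism with $i \neq j$, the subspaces $\{m + \lambda\varphi(m) : m \in M_i\}$ with $\lambda \in \mathbb{F}$ would form an infinite family of pairwise distinct irreducible $\Omega$-submodules of $M_i \oplus M_j$ — here one uses that a field of characteristic zero is infinite — contradicting $|\Sub_\Omega M| < \infty$. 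Hence no two irreducible elements of $\Sub_\Omega M$ are $\Omega$-isomorphic. For (c) $\Rightarrow$ (d) I would invoke the classical structure theory of completely reducible Lie algebras of linear transformations over a field of characteristic zero (see \cite{Ja62}): a completely reducible $\Omega \subseteq \mathfrak{gl}(M)$ splits as $\Omega = \Omega_1 \oplus A$ with $\Omega_1 = [\Omega,\Omega]$ semisimple, $A = \Z(\Omega) = \Rad \Omega$, and every element of $A$ acting on $M$ as a semisimple transformation (squarefree minimal polynomial). The hypothesis that $\Sub_\Omega M$ has no $\Omega$-isomorphic irreducible elements is carried over unchanged.

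The crux of the proof is (d) $\Rightarrow$ (a), where I first have to promote complete reducibility from $\Omega_1$ to $\Omega$. By Weyl's theorem $M$ is a semisimple $\Omega_1$-module, hence the direct sum of its $\Omega_1$-isotypic components; each such component $W$ is $\Omega$-invariant because $A$ centralizes $\Omega_1$, and writing $W \cong U \otimes \Hom_{\Omega_1}(U,W)$ with $U$ irreducible, the algebra $A$ acts on the multiplicity space $\Hom_{\Omega_1}(U,W)$ through commuting semisimple operators, which are therefore simultaneously diagonalizable; this splits $W$, and hence $M$, into irreducible $\Omega$-submodules. Together with the no-isomorphic-irreducibles hypothesis this yields the announced decomposition $M = M_1 \oplus \dots \oplus M_r$ into pairwise non-$\Omega$-isomorphic irreducibles. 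Finally, in such a module every $\Omega$-invariant subspace $N$ is semisimple, and each irreducible summand of $N$, having nonzero projection to exactly one component $M_i$, is contained in and therefore equal to that $M_i$; so $N$ is a direct sum of some of the $M_i$, which gives the displayed description of $\Sub_\Omega M$ and the lattice isomorphism $\Sub_\Omega M \cong \Pow\{1,\dots,r\} \cong Q_r$. That lattice is finite and boolean by Theorem~\ref{boolean}, closing the cycle. The same partial-sum description identifies the atoms of $\Sub_\Omega M$ with $M_1,\dots,M_r$, giving the uniqueness of the decomposition and the final assertion.

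The main obstacle is precisely this last implication: one must pin down that semisimplicity of the centre's action is exactly what is needed to upgrade $\Omega_1$-complete reducibility to $\Omega$-complete reducibility while preserving the multiplicity-one conclusion, and one should note that (b) $\Rightarrow$ (a) is genuinely not lattice-theoretic — it fails without the hypothesis of finiteness over an infinite field, as $M_3$ shows — so the module-theoretic input is essential. The remaining lattice bookkeeping is routine given Theorem~\ref{boolean}.
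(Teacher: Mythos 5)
Your proposal is correct and follows essentially the same skeleton as the paper's proof: the one genuinely non-classical step --- that finiteness of $\Sub_\Omega M$ together with complementedness forbids $\Omega$-isomorphic irreducible constituents --- is carried out by exactly the same pencil argument, producing the infinite family $\{m+\lambda\varphi(m): m\in M_i\}$, $\lambda\in\mathbb{F}$, of pairwise distinct invariant subspaces of $M_i\oplus M_j$ and using that a field of characteristic zero is infinite. The differences lie in how the classical ingredients are handled. Where you re-derive the implication from (d) to complete reducibility of $\Omega$ via $\Omega_1$-isotypic components and the action of the centre on multiplicity spaces, the paper simply cites \cite{Ja62} (Theorem 9 of Chapter II, Section 5, and Theorem 10 of Chapter III, Section 7) for the equivalence of (b), (c) and (d) modulo the finiteness and multiplicity-one clauses; and where you identify an arbitrary invariant subspace as a partial sum of the $M_i$ by a projection argument (nonzero projections of an irreducible summand onto two distinct components $M_i$ and $M_j$ would force $M_i\cong M_j$), the paper instead invokes the Krull--Schmidt theorem and then cites \cite{Pi82} for the equivalence of (a) and (c). Both variants are sound; yours is more self-contained, the paper's is shorter. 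One imprecision you should repair: over a general field of characteristic zero the commuting semisimple operators coming from $A$ need not be simultaneously \emph{diagonalizable} (consider a rotation over $\mathbb{R}$); what you actually need, and what is true, is that operators with squarefree (hence separable) minimal polynomials generate a semisimple commutative associative algebra, so the multiplicity space $\Hom_{\Omega_1}(U,W)$ is still a semisimple module and $W$ splits into irreducible $\Omega$-submodules. With that wording corrected the argument is complete.
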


\begin{proof}
  If we omit the condition of finiteness and the property of nonexistence of $\Omega$-isomorphic subspaces, the equivalence of the assertions (b), (c) and (d) follows from~\cite{Ja62} Theorem 9 in Section 5 of Chapter II, and Theorem 10 in Section 7 of Chapter III.

  Now assume $\Sub_\Omega M$ is a finite and complemented lattice. Then, there are no irreducible $\Omega$-isomorphic subspaces $P$ and $Q$. Otherwise, there exists a $\Omega$-isomorphism $\varphi\colon P\to Q$, so $\varphi f=f \varphi$ for any $f\in \Omega$. In this case, for any $ \alpha\in \mathbb{F}$ we can define the nonzero subspace,
  $$
    R^\alpha = \{a+\alpha\,\varphi(a) \mid a\in P\}\subseteq P\oplus Q.
  $$It is clear that $R^\alpha \in \Sub_\Omega M$. Suppose $R^\alpha =R^\beta $, so $a+\alpha\, \varphi(a)=b+\beta\, \varphi(b)$, $a,b \in P\setminus\{0\}$. The last equation implies
  \begin{equation}
    a - b = \beta\,\varphi(b)-\alpha\,\varphi(a)=\varphi (\alpha a-\beta b) \in P \cap Q = 0,
  \end{equation}
  and therefore $\beta=\alpha$. Hence for any scalar $\alpha$ we get distinct $\Omega$-invariant subspaces $R^\alpha$, a contradiction because the based field $\mathbb{F}$ is infinite and $\Sub_\Omega M$ is a finite set.

  Assume finally that $\Omega$ is completely reducible in $M$ and $M$ does not contain  $\Omega$-isomorphic irreducible subspaces. Then $M=M_1\oplus\dots \oplus M_r$, where the different $M_j\in \Sub_\Omega M$ are not $\Omega$-isomorphic. For any irreducible subspace $P$, it follows that $M=P\oplus P_2\oplus \dots \oplus P_k$, where $P_i$ are irreducible. Since there are no $\Omega$-isomorphic irreducible subspaces, applying Krull-Schmidt Theorem of sets of linear transformations, we get that $k+1=r$ and $P=M_{j}$ for some $1\leq j\leq r$. So, apart from the trivial subspace, the $\Omega$-invariant subspaces of $M$ are direct sums of a finite number of subspaces $M_1,\dots, M_r$.
  The equivalence of (a) and (c) follows from~\cite{Pi82}*{Chapter 2, Corollary c, Section 2.4} and therefore Theorem~\ref{boolean} ends the proof.
\end{proof}

\begin{theorem}\label{finite-lattices}
  Finite lattices of ideals of Lie algebras are always distributive. Up to isomorphisms, these lattices are sublattices of $n$-cube lattices.
\end{theorem}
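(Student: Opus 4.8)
The plan is to reduce the whole statement to showing that $\mathcal{L}_{\id}(L)$ contains no diamond. Ideal lattices of Lie algebras are modular (from identity~(\ref{eq:modular})), hence contain no pentagon by Theorem~\ref{distributive-modular}(a); so by Theorem~\ref{distributive-modular}(b), once diamonds have been excluded the lattice is automatically distributive, and the second assertion is then immediate from Theorem~\ref{boolean}(a), which embeds any finite distributive lattice as a sublattice of the lattice of subsets of a finite set, that is, of some $n$-cube $Q_n$. Thus the entire content is: if $L$ has finitely many ideals, then $\mathcal{L}_{\id}(L)$ has no sublattice isomorphic to $M_3$.

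Suppose for contradiction that $\{X,A,B,C,Y\}$ is such a sublattice, with bottom $X$, top $Y$, and middle atoms the pairwise distinct ideals $A,B,C$, all strictly between $X$ and $Y$; so $A\cap B=A\cap C=B\cap C=X$ and $A+B=A+C=B+C=Y$. The first step is to realise the interval $[X,Y]$ as a lattice of invariant subspaces: $Y/X$ is a module over $L$ under the action $x\cdot(y+X)=[x,y]+X$ (well defined and a Lie action since $X,Y$ are ideals and by the Jacobi identity), and $I\mapsto I/X$ is a lattice isomorphism from $[X,Y]$ onto the lattice of $L$-submodules of $Y/X$, because a subspace of $Y/X$ is an $L$-submodule exactly when its preimage in $Y$ is an ideal of $L$. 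Transporting the diamond along this isomorphism yields nonzero, pairwise distinct $L$-submodules $\overline A,\overline B,\overline C$ of $Y/X$ with $\overline A\oplus\overline B=Y/X$ and $\overline C$ a common complement of both $\overline A$ and $\overline B$.

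From here I would repeat the argument in the proof of Lemma~\ref{completely-reducible}. The restrictions to $\overline C$ of the two projections of $Y/X=\overline A\oplus\overline B$ onto its summands are $L$-module isomorphisms (injective because $\overline C$ meets $\overline A$ and $\overline B$ trivially, surjective because $\overline C$ together with either summand spans $Y/X$), so $\overline C$ is the graph of an $L$-module isomorphism $\varphi\colon\overline A\to\overline B$, and $\varphi\neq 0$ since $\overline A\neq 0$. For each $\alpha\in\mathbb{F}$ the subspace $R^\alpha=\{a+\alpha\,\varphi(a)\mid a\in\overline A\}$ is an $L$-submodule, because $x\cdot(a+\alpha\varphi(a))=(x\cdot a)+\alpha\,\varphi(x\cdot a)\in R^\alpha$; and $R^\alpha=R^\beta$ forces $\alpha=\beta$, by comparing the $\overline A$- and $\overline B$-components of $a+\alpha\varphi(a)$ for a fixed $0\neq a\in\overline A$ and using injectivity of $\varphi$. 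As $\mathbb{F}$ is infinite, this gives infinitely many $L$-submodules of $Y/X$, hence infinitely many ideals of $L$ inside $[X,Y]$, contradicting the finiteness of $\mathcal{L}_{\id}(L)$.

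I expect the only delicate point to be this first reduction—identifying the interval $[X,Y]$ with the submodule lattice of $Y/X$, in particular that $L$-submodules of $Y/X$ pull back to ideals of $L$—since once that is in place the diamond is killed by an exact replay of the $R^\alpha$-construction of Lemma~\ref{completely-reducible}, and the remaining two assertions are direct citations of Theorems~\ref{distributive-modular} and~\ref{boolean}.
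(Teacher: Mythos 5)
Your proof is correct, and while the decisive idea is the same one the paper uses (a diamond manufactures a module isomorphism, which over the infinite field $\mathbb{F}$ spawns the one-parameter family of graphs $R^\alpha$ and hence infinitely many ideals), your reduction to that point is genuinely different and more direct. The paper quotients by the bottom $K$ of the diamond, argues that the middle elements become \emph{minimal abelian} ideals of $L/K$ (using $[P_i,P_j]\subseteq K$ and the covering relations), assembles the sum $M$ of all minimal abelian ideals, and then invokes Lemma~\ref{completely-reducible} as a black box to forbid $\Omega$-isomorphic irreducible constituents before exhibiting one via the projections. You instead identify the interval $[X,Y]$ with the lattice of $L$-submodules of $Y/X$ and read the third atom $\overline C$ directly as the graph of an $L$-module isomorphism $\overline A\to\overline B$, then replay the $R^\alpha$ computation inline. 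This buys you two things: you never need the middle elements to be minimal or abelian, and you never need the covering relations $R\succ P_i\succ K$ that the paper asserts for its $M_3$ sublattice (a sublattice isomorphic to $M_3$ need not consist of covers in the ambient lattice, so the paper's claim that the $Q_i$ are minimal ideals of $L/K$ deserves the extra remark that in a modular lattice a diamond can always be shrunk to a covering one --- your argument sidesteps this entirely). What the paper's route buys in exchange is reuse of Lemma~\ref{completely-reducible} and a connection to the abelian-socle machinery exploited in later sections. Your "delicate point" --- that submodules of $Y/X$ pull back to ideals of $L$ --- is fine: if $X\subseteq \widetilde W\subseteq Y$ is the preimage of a submodule $W$, then $[L,\widetilde W]\subseteq \widetilde W+X=\widetilde W$, and the correspondence clearly preserves meets and joins. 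The closing citations of Theorems~\ref{distributive-modular} and~\ref{boolean} are exactly as in the paper.
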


\begin{proof}
  Let $L$ be a Lie algebra and $\mathcal{L}=\mathcal{L}_{\id}(L)$ be its lattice of ideals. Assume the result is false. Since $\mathcal{L}$ is a modular and nondistributive lattice, there is a sublattice of type $M_3$ in $\mathcal{L}$ according to Theorem~\ref{distributive-modular}. So there exist ideals $K,R, P_1, P_2$ and $P_3$ such that $P_i\cap P_j=K$, $P_i+P_j=R$, and $R$ covers $P_i$ and $P_i$ covers $K$ for each $i=1,2,3$. Consider now the quotient Lie algebra $L/K$ and note that $Q_i=P_i/K$ are minimal ideals. Since $[P_i,P_j]\subseteq K$ and $P_k\subseteq P_i+P_j=R$, the ideal $T=R/K$ decomposes as the direct sums, $T=Q_1\oplus Q_2=Q_1\oplus Q_3=Q_2\oplus Q_3$. Then $[Q_i,Q_j]\subseteq Q_i\cap Q_j=0$, and therefore each $Q_i$ is an abelian ideal. Let $M$ denote the sum of all minimal abelian ideals of $L/K$, and consider the Lie algebra of linear maps in $M$, $\Omega=\ad_M L/K$, $\ad x(m)=[x,m]$ for all $x\in L/K, m\in M$. The elements of the lattice $\Sub_\Omega M$ are just the set of ideals of $L/K$ inside $M$. This lattice is finite and complemented because of $M$ decomposes as a direct sum of minimal abelian ideals. Using Lemma~\ref{completely-reducible}, there is no $\Omega$-isomorphic ideals of $L$ contained in $M$. Since $Q_i$ are ideals, every canonical projection $\varphi_{ijk}\colon T=Q_i\oplus Q_j\to Q_k$, $\{(i,j,k): i<j, k=i,j\}$, satisfies $\varphi_{ijk}([x,a])=[x,\varphi_{ijk}(a)]$ for all $x\in L/K$. So, in a natural way, the maps $\varphi_{ijk}$ let us define an $\Omega$-isomorphism $\varphi\colon Q_1\to Q_2$, a contradiction. The final part follows from Theorem~\ref{boolean}.
\end{proof}

The next result describes the variety of Lie algebras whose lattice of ideals is either complemented or boolean. The Jacobson radical of a Lie algebra $L$ is the intersection of all maximal ideals of $L$ and $Z(L)$ stands for the centre of $L$ (so $[L,Z(L)]=0$). We remark that the general assertion about complemented lattices and the equivalence of statements (a) and (e) have been stablished in Lemma 2.3 in~\cite{Be95}.

\begin{corollary}\label{cor-booleano}
  The Lie algebras with complemented lattice of ideals are of the form $L=S\oplus A$, where $S$ is a semisimple ideal of $L$ and $A$ is an abelian ideal, so $A=Z(L)$. Moreover, the following assertions are equivalents:
  \begin{itemize}
    \item [\emph{(a)}]$\mathcal{L}_{\id}(L)$ is a boolean lattice.
    \item [\emph{(b)}]$\mathcal{L}_{\id}(L)$ is finite and complemented.
    \item [\emph{(c)}]$\mathcal{L}_{\id}(L)$ is an $n$-cube lattice for some $n\geq 0$.
    \item [\emph{(d)}]The Jacobson radical of $L$ is trivial and $Z(L)$ has dimension at most 1.
    \item [\emph{(e)}]$L$ is either $0$ or one of the following Lie algebras: $\mathbb{F}z$, a semisimple algebra $S$ or a direct sum as ideals of $\mathbb{F}z$ and $S$.

  \end{itemize}
  In this case, $L$ has $2^n$ ideals where $n$ is either $r$ or $r+1$ and $r$ is the number of simple components of $S$ ($r=0$ is also possible).
\end{corollary}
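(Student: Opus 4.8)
The plan is to carry out a single structural computation — the ideal lattice of a reductive Lie algebra — and then read off all five equivalences and the final count from it, keeping the implications arranged as one cycle so as to avoid circularity. For the opening assertion (which is also \cite{Be95}*{Lemma 2.3}) I would argue as follows: if $\mathcal{L}_{\id}(L)$ is complemented, pick a complement $S$ of the solvable radical $R=\Rad L$; since $R$ and $S$ are ideals with $R\cap S=0$ and $R+S=L$ they commute, and $S\cong L/R$ is semisimple. By the modular law~(\ref{eq:modular}) the interval $[0,R]$ of ideals of $L$ contained in $R$ is again complemented (a complement of $X\subseteq R$ being $C\cap R$ for any complement $C$ of $X$ in $\mathcal{L}_{\id}(L)$), so $R^2=[R,R]$ admits a complement $D\subseteq R$ with $R=R^2\oplus D$; then $[R^2,D]\subseteq R^2\cap D=0$ and $D^2\subseteq R^2\cap D=0$, hence $R^2=[R,R]=[R^2,R^2]$, which forces $R^2=0$ since $R$ is solvable. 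Thus $A:=R$ is abelian, and $[S,A]=0$ together with $Z(S)=0$ yields $A=Z(L)$.

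Next I would fix an arbitrary reductive algebra $L=S\oplus A$, write $S=S_1\oplus\cdots\oplus S_r$ for the decomposition into simple ideals ($r=0$ allowed), and compute $\mathcal{L}_{\id}(L)$. The crux is that if $I$ is an ideal and $x=s_1+\cdots+s_r+a\in I$ with $s_i\in S_i$, $a\in A$, then for each $i$ with $s_i\neq 0$ we have $0\neq[S_i,s_i]\subseteq I$ (as $Z(S_i)=0$), and the ideal generated by this nonzero subset of the simple algebra $S_i$ is all of $S_i$; hence $S_i\subseteq I$, then $s_1+\cdots+s_r\in I$, then $a\in I$. So every ideal of $L$ has the form $\bigl(\bigoplus_{i\in T}S_i\bigr)\oplus V$ for some $T\subseteq\{1,\dots,r\}$ and some subspace $V\subseteq A$, with joins and meets computed componentwise; therefore $\mathcal{L}_{\id}(L)\cong Q_r\times\Sub_{\mathbb F}A$ as lattices. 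I would also record here that $Z(L)=A$ and that, when $A\neq 0$, the maximal ideals of $L$ are $\bigl(\bigoplus_{j\neq i}S_j\bigr)\oplus A$ for $1\le i\le r$ together with $S\oplus H$ for $H$ a hyperplane of $A$, while if $A=0$ they are just the $\bigoplus_{j\neq i}S_j$.

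I then close the cycle (a)$\Rightarrow$(b)$\Rightarrow$(d)$\Rightarrow$(e)$\Rightarrow$(c)$\Rightarrow$(a). Both (a) and (b) contain ``complemented'', so by the two steps above $\mathcal{L}_{\id}(L)\cong Q_r\times\Sub_{\mathbb F}A$; since $\Sub_{\mathbb F}\mathbb F^{d}$ is infinite and nondistributive for $d\ge 2$ but a $1$- or $2$-element chain for $d\le 1$ (Example~\ref{n-subspace}), each of ``boolean'' and ``finite'' forces $\dim A\le 1$, so the lattice is the cube $Q_r$ or $Q_{r+1}$, which is (c) via Theorem~\ref{boolean}; and from the list of maximal ideals above, $\dim A\le 1$ gives $J(L)=0$ with $Z(L)=A$ of dimension $\le 1$, which is (d). In particular (a)$\Rightarrow$(b). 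For (c)$\Rightarrow$(a), $Q_n=\Pow\{1,\dots,n\}$ is boolean. For (d)$\Rightarrow$(e): for every maximal ideal $M$ the quotient $L/M$ is simple or $1$-dimensional, so the image of $\Rad L$ in $L/M$ lies in $\Rad(L/M)$ and $[L/M,\Rad(L/M)]=0$; hence $[L,\Rad L]\subseteq M$ for all such $M$, so $[L,\Rad L]\subseteq J(L)=0$, giving $\Rad L\subseteq Z(L)$, so $L=S\oplus Z(L)$ is reductive by Levi's theorem, and $\dim Z(L)\le 1$ leaves exactly the algebras of (e). Finally (e)$\Rightarrow$(c) by the second step (the ideal lattice is $Q_0$, $Q_1$, $Q_r$, or $Q_{r+1}$), and $|Q_n|=2^n$ with $n=r$ when $Z(L)=0$ and $n=r+1$ when $\dim Z(L)=1$.

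I expect the main obstacle to be the algebraic implication (d)$\Rightarrow$(e): the hypothesis only says that the intersection of the maximal ideals vanishes, and the workable reformulation is the inclusion $[L,\Rad L]\subseteq J(L)$, which upgrades it to ``$\Rad L$ central'' so that Levi's theorem finishes the job. The remaining care goes into the bracketing argument of the second step and into keeping the implications non-circular, which is why the structural work is fed only into the hypotheses that mention ``complemented''.
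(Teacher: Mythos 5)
Your proof is correct, but it takes a genuinely more elementary and self-contained route than the paper's. For the opening assertion the paper complements $L^2$ (showing the complement is central and that $L^2$ is a sum of minimal ideals with $I_j^2=I_j$), whereas you complement $\Rad L$ and use the modular law to make the interval $[0,\Rad L]$ complemented, killing $\Rad(L)^2$ via $(R^2)^2=R^2$; both work. The bigger divergence is in the equivalences: the paper gets (b)$\Rightarrow$(a) by invoking Theorem~\ref{finite-lattices} (distributivity of finite ideal lattices, whose proof rests on the $M_3$ argument and Lemma~\ref{completely-reducible}), gets (d)$\Rightarrow$(b) from Lemma~\ref{completely-reducible} (Jacobson's complete-reducibility theorems), and gets (e)$\Rightarrow$(d) by citing the identity $\mathcal{J}(L)=[L,\Rad(L)]$ from (\ref{eq:jacobsonradical}). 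You bypass all of this with a single hands-on computation: every ideal of a reductive $L=S_1\oplus\dots\oplus S_r\oplus A$ is $\bigl(\bigoplus_{i\in T}S_i\bigr)\oplus V$, so $\mathcal{L}_{\id}(L)\cong Q_r\times\Sub_{\mathbb F}A$, and then ``finite'' or ``distributive'' each force $\dim A\le 1$ because $\Sub_{\mathbb F}\mathbb F^d$ is infinite and contains $M_3$ for $d\ge 2$ (Example~\ref{n-subspace}); you also reprove the inclusion $[L,\Rad L]\subseteq\mathcal{J}(L)$ directly from the shape of $L/M$ for $M$ maximal. What the paper's route buys is reuse of machinery needed elsewhere (Lemma~\ref{completely-reducible} and Theorem~\ref{finite-lattices} carry the weight of Sections 3--4); what yours buys is independence from that machinery and an explicit lattice isomorphism $\mathcal{L}_{\id}(L)\cong Q_r\times\Sub_{\mathbb F}A$ from which the final count $2^n$ and the list of maximal ideals are read off at a glance. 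The only cosmetic gap is that you never state explicitly that $Q_r\times\Sub_{\mathbb F}A$ is complemented (componentwise complements), which is what makes the converse of the opening assertion --- implicitly used when closing the cycle at (e)$\Rightarrow$(c)$\Rightarrow$(a)$\Rightarrow$(b) --- immediate; one sentence fixes that.
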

\begin{proof}
  Let $\Omega=\ad L=\{\ad x:x\in L\}$, where $\ad x(a)=[x,a]$. Note that $\mathcal{L}_{\id}(L)$ is the set of $\Omega$-invariant subspaces of $L$, so $\mathcal{L}_{\id}(L)=\Sub_\Omega L$. We will prove firstly the characterization of complemented lattices. Assume $\mathcal{L}_{\id}(L)$ is complemented and let $A$ be an ideal such that $L=L^2\oplus A$. Then $[L,A]\subseteq L^2\cap A=0$, thus $A\subset \Z(L)$. On the other hand, the derived ideal $L^2$ decomposes as a sum of minimal ideals, $L^2=I_1\oplus \dots\oplus I_k$, and $I_j^2=I_j$ because of $(L^2)^2=L^2$. Hence $L=L^2\oplus \Z(L)$ is a direct sum of the semisimple ideal $L^2$ and the centre. Conversely, in the case $L=S\oplus Z(L)$, $\Omega=\ad L\cong \ad S$ and $\mathcal{L}_{\id}(L)$ is complemented by \cite{Ja62}*{Theorem 10, Section 7, Chapter III}.

  Now we will check the equivalence of the five conditions. Theorem~\ref{finite-lattices} shows that (b) implies (a). The previous paragraph and Example~\ref{n-subspace} give us (e) from (a). Since the Jacobson radical of $L$ is just $L^2\cap \Rad(L)=[L,\Rad(L)]$, where  $\Rad(L)$ is the solvable radical (see \cite{Ja62}*{Chapter III, Section 9} and \cite{Ma67}), (e)$\Rightarrow$(d) follows from $[L,\Rad(L)]=[L,\mathbb{F}z]=0$. In the case $L^2\cap \Rad(L)=0$ we have $L=S\oplus \Z(L)$, so $S=L^2$ is semisimple and $\ad L\cong S$. Then (d)$\Rightarrow$(b) follows by using $\dim Z(L)\leq 1$ and Lemma~\ref{completely-reducible}. Finally, Theorem~\ref{boolean} ensures that (a) and (c) are equivalent.
\end{proof}

\begin{remark}\label{equivalence reductive}
  We say a Lie algebra $L$ is reductive if the adjoint representation $\ad\colon L\to \mathfrak{gl}(L)$ is completely reducible (see \emph{\cite{Ja62}*{Chapter III, Exercise 19}}). In \emph{\cite{Hu72}*{Chapter V, Section 19}}, a Lie algebra is called reductive if the solvable radical is just the centre. In the literature, reductive Lie algebras are also defined as the variety of Lie algebras with trivial Jacobson radical. All these definitions are equivalent.
\end{remark}

\begin{remark}
  Different algorithms to obtain modular or distributive finite lattices can be found in \emph{\cite{EHR02}} and  \emph{\cite{JiLa15}}. In Figure~\ref{fig:allHasse} we show all possible distributive lattices up to $8$ nodes according to \emph{\cite{EHR02}}. Lie algebras up to 6 ideals have been characterized in \emph{\cite{Be92_1}} and \emph{\cite{Ro17}}. Using ideas and techniques included in \emph{\cite{Be92_1}}, \emph{\cite{Be95}} and \emph{\cite{Ro17}}, most of the distributive lattices of 7 and 8 nodes can be easily performed as the lattices of ideals of some Lie algebra.
\end{remark}

\begin{figure}
  \centering
  \includegraphics[width=\textwidth,keepaspectratio,height=0.9\textheight]{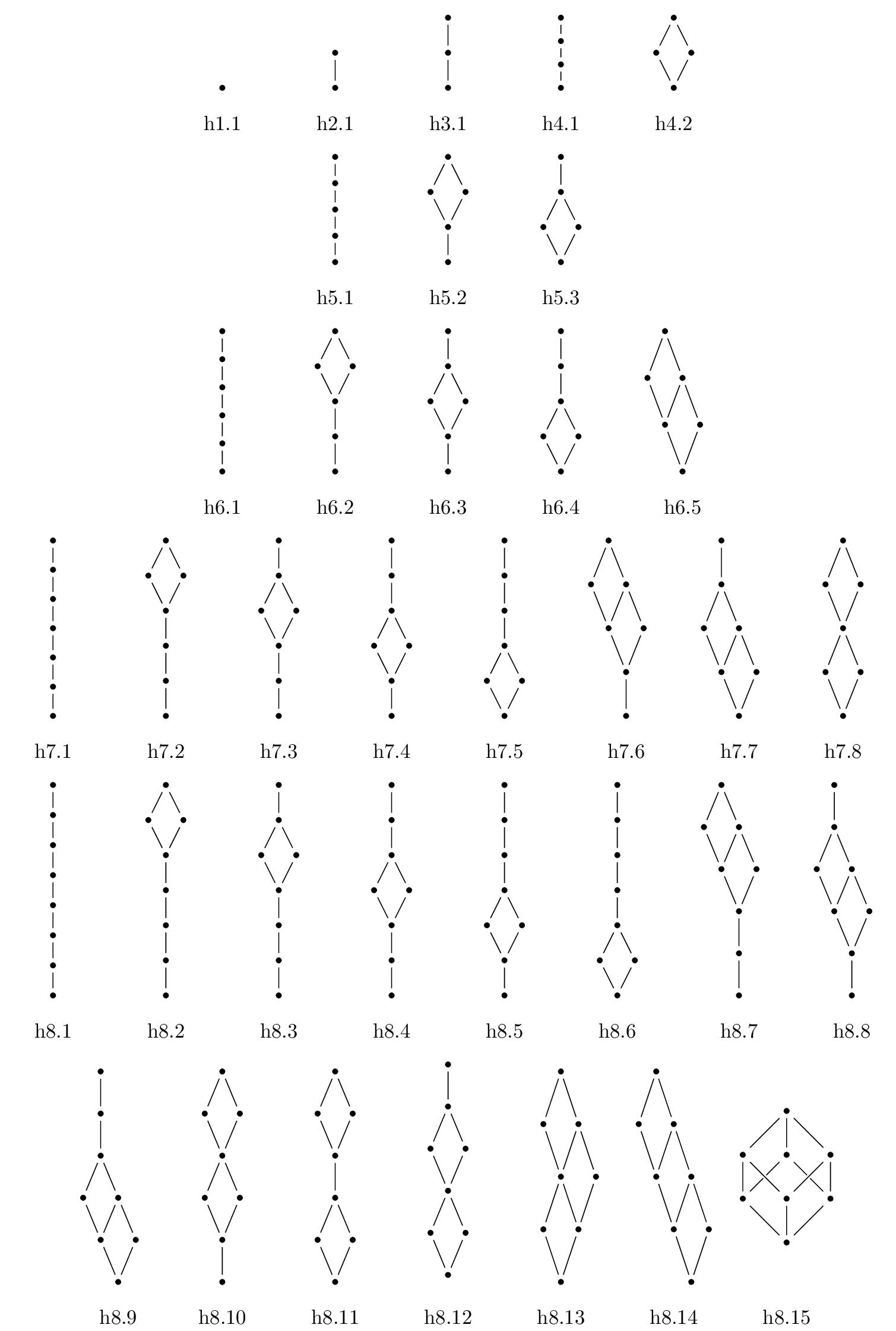}
  \caption{All Lie algebras Hasse diagrams up to 8 ideals (nodes).}
  \label{fig:allHasse}
\end{figure}

\section {Preliminary structure results}

Let $L$ be a Lie algebra. From Levi's Theorem~\cite{Ja62}*{Chapter III, Section 9}, $L$ decomposes as $L=S\oplus \Rad(L)$ where $S$ is a (maximal) semisimple subalgebra known as Levi factor of $L$, and $\Rad(L)$ is the solvable radical. This decomposition is called \emph{Levi decomposition} of $L$. The restricted adjoint representation:
\begin{equation}\label{eq:adjointrepresentation}
  \rho\colon S\to \mathfrak{gl}(\Rad(L)),\, \rho(s)(a)=\ad s(a)=[s,a],
\end{equation}
induces a $S$-module structure on $\Rad(L)$. So the solvable radical decomposes (not uniquely) as a direct sum of irreducible $S$-modules, $\ad S$-modules or $\ad S$-subspaces in the sequel. The algebra $L$ is called \emph{faithful} if so is the adjoint representation $\rho$. This condition is equivalent to the fact that $L$ has no simple ideals. Denote by $\Ssoc(L)$ the sum of all simple ideals of $L$. The ideal $\Ssoc(L)$ is a semisimple algebra which is contained in any Levi factor of $L$. Then, we can decompose $L$ as $L=\Ssoc(L)\oplus S'\oplus \Rad(L)$ where $S=\Ssoc(L)\oplus S'$, $L^*=S'\oplus \Rad(L)$ is a faithful subalgebra of $L$, $S'$ is a Levi factor of $L^*$ and $\Rad(L^*)=\Rad(L)$.

\begin{lemma}\label{lema-reduction-1} The ideals of a Lie algebra $L$ with Levi decomposition $L=S\oplus \Rad(L)$ are of the form $B\oplus A$, where $A$ is an ideal of $L$ contained in $\Rad(L)$ and $B$ is an ideal of $S$ such that $[B,\Rad(L)]\subseteq A$.
\end{lemma}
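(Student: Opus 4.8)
The plan is to analyze an arbitrary ideal $I$ of $L$ by decomposing it along the Levi decomposition $L = S \oplus \Rad(L)$, showing that its "semisimple part" is forced to be an ideal of $S$ and its "radical part" is forced to be an ideal of $L$, and then checking that these two pieces actually live inside $I$ so that $I = B \oplus A$. First I would recall that $\Rad(L)$ is characteristic, hence an ideal of $L$, so $A := I \cap \Rad(L)$ is an ideal of $L$ contained in $\Rad(L)$; in particular $[S,A] \subseteq A$ and $[\Rad(L),A]\subseteq A$. The natural candidate for $B$ is the image of $I$ under the projection $\pi\colon L = S\oplus\Rad(L)\to S$ along $\Rad(L)$. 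One then shows $B := \pi(I)$ is an ideal of $S$: for $b = \pi(x)$ with $x = b + r \in I$ ($r\in\Rad(L)$) and $s\in S$, we have $[s,x]=[s,b]+[s,r]\in I$ with $[s,b]\in S$ and $[s,r]\in\Rad(L)$, so $[s,b]=\pi([s,x])\in B$.

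The key step is to show that $B$ is actually contained in $I$, i.e. that the projection of $I$ can be "lifted" inside $I$. This is where I expect the main work to lie. The idea is to use that $S$ is semisimple, so $S = S^2$ and $\ad S$ acts completely reducibly on everything in sight; concretely, $B$ as an ideal of the semisimple algebra $S$ decomposes as a sum of simple ideals $B = B_1\oplus\cdots\oplus B_t$, each satisfying $B_i = [B_i,B_i]\subseteq [B_i, B]$. Given $b\in B$, write $b = \pi(x)$, $x = b + r\in I$; then for any $s\in B$ one has $[s,x] = [s,b] + [s,r] \in I$, and as $s$ ranges over $B$ the brackets $[s,b]$ span $[B,b]$; using $B = [B,B]$ (so every element of $B$ is a sum of brackets $[s,b']$) together with the fact that the $\Rad(L)$-component of $[s,x]$ lies in $[S,\Rad(L)]\subseteq \Rad(L)$, I would argue that $[S,I\cap\text{(preimage of }B)]$ produces, up to elements of $\Rad(L)\cap I = A$, all of $B$; more precisely, $[B,x] \subseteq I$ has $S$-component equal to $[B,b]$, and summing over a generating set shows $B + A \subseteq I$, whence $B\subseteq I$ modulo $A$ — but since we will ultimately write $I = B \oplus A$ with $B\subseteq S$, the honest statement is that $B\subseteq I + \Rad(L)$ and the $S$-part of $I$ equals $B$, so $I = B\oplus A$ follows once we know $I \cap S = B$. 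An alternative and cleaner route: since $\ad S$ is completely reducible on $L$, choose an $S$-module complement; the $S$-submodule generated by $I$ splits as $(I\cap S)\oplus(I\cap\Rad(L))$ because $[S,I]\subseteq I$ and the two summands of $L$ are $S$-submodules, giving $I = (I\cap S)\oplus(I\cap\Rad(L))$ directly. I would phrase $B := I\cap S$ and $A := I\cap\Rad(L)$ from the start and prove $I = B\oplus A$ via this complete-reducibility argument, which is the genuinely substantive point.

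Finally I would verify the two remaining conditions. That $B$ is an ideal of $S$: $[S,B]\subseteq [S,I]\cap[S,S]\subseteq I\cap S = B$. That $A$ is an ideal of $L$: $A = I\cap\Rad(L)$ is an intersection of two ideals. That $[B,\Rad(L)]\subseteq A$: for $b\in B\subseteq I$ and $r\in\Rad(L)$ we have $[b,r]\in I$ (since $I$ is an ideal) and $[b,r]\in\Rad(L)$ (since $\Rad(L)$ is an ideal), so $[b,r]\in I\cap\Rad(L) = A$. Conversely, any $B\oplus A$ of the stated form is readily checked to be an ideal: $[S, B\oplus A]\subseteq [S,B] + [S,A]\subseteq B + A$ using that $B$ is an ideal of $S$ and $A$ is an ideal of $L$, and $[\Rad(L), B\oplus A]\subseteq [\Rad(L),B] + [\Rad(L),A]\subseteq A$ using the hypothesis $[B,\Rad(L)]\subseteq A$ and that $A$ is an ideal; since $L = S + \Rad(L)$ this shows $[L, B\oplus A]\subseteq B\oplus A$. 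The main obstacle is the splitting $I = (I\cap S)\oplus(I\cap\Rad(L))$, which rests on complete reducibility of $\ad S$ and on the fact that $S$ and $\Rad(L)$ are $\ad S$-submodules of $L$ with $\Rad(L)$ an ideal of $L$; everything else is bookkeeping with the bracket.
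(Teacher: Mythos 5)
Your bookkeeping is fine: once one knows that $I=(I\cap S)\oplus(I\cap\Rad(L))$, the verifications that $B=I\cap S$ is an ideal of $S$, that $A=I\cap\Rad(L)$ is an ideal of $L$, that $[B,\Rad(L)]\subseteq A$, and the converse direction are all correct and routine. The problem is the splitting itself, which you rightly call ``the genuinely substantive point'' but do not actually prove. Your first route peters out (you concede it only yields $B\subseteq I+\Rad(L)$, which holds by the very definition of $B=\pi(I)$ and says nothing). Your ``cleaner route'' is a non sequitur: complete reducibility of $\ad S$ does \emph{not} imply that an $S$-submodule $I$ of $L=S\oplus\Rad(L)$ equals $(I\cap S)\oplus(I\cap\Rad(L))$. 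A submodule of a direct sum of two submodules need not be the sum of its intersections with them. What Weyl's theorem gives is a splitting $I=J\oplus(I\cap\Rad(L))$ with $J$ \emph{some} $S$-submodule projecting isomorphically onto $B=\pi(I)$; a priori $J=\{b+\tau(b):b\in B\}$ for a nonzero $S$-module map $\tau\colon B\to\Rad(L)$, and such ``tilted'' submodules genuinely exist (e.g.\ the diagonal in $\mathfrak{sl}_2\ltimes V$, $V$ the adjoint module made abelian, is an $S$-submodule meeting both summands trivially). Ruling them out requires using that $I$ is an ideal of the Lie algebra $L$, not merely an $\ad S$-invariant subspace, and that is exactly the step you skip.

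This missing step is where the paper invokes the Levi--Malcev machinery: it takes a Levi decomposition $I=I(S)\oplus\Rad(I)$ of the ideal $I$ itself, notes $\Rad(I)=\Rad(L)\cap I$, and applies the Malcev--Harish-Chandra theorem to get an \emph{inner} automorphism $\sigma$ of $L$ with $\sigma(I(S))\subseteq S$; since inner automorphisms preserve the ideal $I$, this gives $\sigma(I(S))\subseteq S\cap I$, and maximality of Levi factors forces $\sigma(I(S))=S\cap I$, which is the desired splitting. If you want to avoid quoting that theorem, you must do equivalent work by hand, e.g.\ use the ideal property to show that the map $\tau$ above induces an $S$-equivariant $1$-cocycle modulo $A$ and the derived series of $\Rad(L)$, and kill it with Whitehead's first lemma and induction on the derived length. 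As written, your proposal assumes the conclusion at its crucial point, so there is a genuine gap.
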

\begin{proof}
  For such $B$ and $A$, it is obvious that  the vector space $B+A$ is an ideal of $L$. 
  Assume then that $I$ is any ideal and consider the Levi decomposition of $I$, so $I=I(S)\oplus \Rad(I)$ where $I(S)$ is a semisimple subalgebra of $L$. It is well known that $\Rad(I)=\Rad(L)\cap I$, which is an ideal of $L$ contained in $\Rad(L)$. On the other hand, according to the Malcev-Harish-Chandra Therorem~\cite{Ja62}*{Chapter III, Section 9}, there exists an inner authomorphism $\sigma$ of $L$ such that $\sigma (I(S))\subseteq S$. Since $I$ is an ideal of $L$, $\sigma (I(S))$ is also contained in $I$. Hence $\sigma (I(S))\subseteq S\cap I$. Now $\sigma (I(S))$ is a Levi factor of $I$ and $S\cap I$  a semisimple subalgebra, so $\sigma(I(S))=S\cap I$. Therefore, $I=B\oplus A$, where $B=S\cap I$ is an ideal of $S$, $A=\Rad(L)\cap I$ and $[B, \Rad(L)]=[S\cap I, \Rad(L)]\subseteq \Rad(L)\cap I=A$, which proves the result.
\end{proof}

According to \cite{Gr11}*{Chapter I, Section 3.11}, the direct product of two lattices $\mathcal{L}_1$ and $\mathcal{L}_2$ is the lattice on the set $\mathcal{L}_1\times \mathcal{L}_2$ with joint and meet defined componentwise.

\begin{corollary}\label{reduction-1} Let $L$ be a nonsolvable Lie algebra and $L^*=S\oplus \Rad(L^*)$ a faithful subalgebra of $L$ such that $L=\Ssoc(L)\oplus L^*$. The ideals of $L$ are of the form $B_s\oplus I^*$, where $I^*$ is an ideal of $L^*$ and $B_s\subseteq \Ssoc(L)$ is the sum of a finite number of simple ideals of $L$. In particular, $\mathcal{L}_{\id}(\Ssoc(L)\oplus L^*)$ is the direct product lattice $\mathcal{L}_{\id}(\Ssoc(L))\times \mathcal{L}_{\id}(L^*)$ and $\mathcal{L}_{\id}(L)$ is a finite lattice if and only if so is the lattice $\mathcal{L}_{\id}(L^*)$. \hfill $\square$
\end{corollary}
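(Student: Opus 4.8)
The plan is to derive the whole statement from Lemma~\ref{lema-reduction-1}, applied to both $L$ and $L^*$, together with the single extra observation that $\Ssoc(L)$ centralizes $L^*$. Fix a Levi decomposition $L=\Ssoc(L)\oplus S\oplus\Rad(L)$, so that $S_L:=\Ssoc(L)\oplus S$ is a Levi factor of $L$ and $\Rad(L)=\Rad(L^*)$ (the radical is unchanged on removing a semisimple ideal).

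First I would record the commutation facts. Since $\Ssoc(L)$ and $\Rad(L)$ are ideals of $L$ with the first semisimple and the second solvable, $[\Ssoc(L),\Rad(L)]\subseteq\Ssoc(L)\cap\Rad(L)=0$; and $\Ssoc(L)$, $S$ are complementary ideals of the semisimple algebra $S_L$, so $[\Ssoc(L),S]=0$. Hence $[\Ssoc(L),L^*]=0$. In particular every sum $B_s$ of simple components of $\Ssoc(L)$ is an ideal of $L$, and these simple components are precisely the simple ideals of $L$, so that ``ideal of $\Ssoc(L)$'' and ``sum of finitely many simple ideals of $L$'' describe the same objects.

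Next, the description of ideals. Let $I$ be an ideal of $L$. By Lemma~\ref{lema-reduction-1} (applied to $L=S_L\oplus\Rad(L)$) we have $I=B\oplus A$ with $A$ an ideal of $L$ contained in $\Rad(L)$ and $B$ an ideal of $S_L$ satisfying $[B,\Rad(L)]\subseteq A$. Splitting $B=B_s\oplus B'$ along $S_L=\Ssoc(L)\oplus S$, the commutation facts kill the $\Ssoc(L)$-contribution to the constraint, leaving $[B',\Rad(L^*)]\subseteq A$. Now $A$, being an ideal of $L$ contained in $L^*$, is an ideal of $L^*$, and $B'$ is an ideal of the Levi factor $S$ of $L^*$; so the ``easy direction'' of Lemma~\ref{lema-reduction-1} for $L^*$ gives that $I^*:=B'\oplus A$ is an ideal of $L^*$, and $I=B_s\oplus I^*$. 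Conversely, for any ideal $I^*$ of $L^*$ and any sum $B_s$ of simple ideals of $L$ one has $[L,B_s\oplus I^*]=[L,B_s]+[\Ssoc(L),I^*]+[L^*,I^*]\subseteq B_s\oplus I^*$, using $[\Ssoc(L),L^*]=0$; and the sum $B_s+I^*$ is direct because $L=\Ssoc(L)\oplus L^*$. This yields the claimed form of the ideals, and the assignment $I=B_s\oplus I^*\mapsto(B_s,I^*)$ is then a bijection $\mathcal{L}_{\id}(L)\to\mathcal{L}_{\id}(\Ssoc(L))\times\mathcal{L}_{\id}(L^*)$; directness of $L=\Ssoc(L)\oplus L^*$ makes it and its inverse order preserving, so it is a lattice isomorphism onto the direct product lattice. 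Finally, $\Ssoc(L)$ being semisimple, $\mathcal{L}_{\id}(\Ssoc(L))$ is finite and nonempty (in fact a cube $Q_r$, $r$ the number of simple ideals of $L$, by Corollary~\ref{cor-booleano}), whence $\mathcal{L}_{\id}(L)$ has $2^r\cdot|\mathcal{L}_{\id}(L^*)|$ elements and is finite exactly when $\mathcal{L}_{\id}(L^*)$ is.

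I expect the only point needing care to be the backward application of Lemma~\ref{lema-reduction-1}: one must confirm that $A=\Rad(L)\cap I$ really is an ideal of the subalgebra $L^*$ (it is, since $A\subseteq\Rad(L^*)\subseteq L^*$ and $[L^*,A]\subseteq[L,A]\subseteq A$) and that $B'$ is genuinely an ideal of the Levi factor $S$ of $L^*$, so that $(B',A)$ satisfies the hypotheses of the lemma for $L^*$. Once this is pinned down, the rest is bookkeeping with the two direct-sum decompositions.
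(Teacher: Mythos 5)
Your proof is correct and is exactly the derivation the paper leaves implicit (the corollary carries only a $\square$, being presented as an immediate consequence of Lemma~\ref{lema-reduction-1}): apply that lemma to $L$ and to $L^*$, split the semisimple part of each ideal along $S_L=\Ssoc(L)\oplus S$, and use $[\Ssoc(L),L^*]=0$. The one point worth making explicit is that the commutation $[\Ssoc(L),S]=0$ holds because the Levi factor $S$ of $L^*$ is chosen as the complementary \emph{ideal} of $\Ssoc(L)$ inside a Levi factor of $L$ (an arbitrary subalgebra complement could be the graph of a nonzero homomorphism into $\Ssoc(L)$, and then $L^*$ would not even be an ideal of $L$), which is what the paper's construction of $L^*$ just before the corollary provides.
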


\begin{example} \emph{$(n,k)$-lattices $Q_n\times C_k (k\geq 2)$:} Let $L_{n,k}=S_n\oplus M_k$ be a Lie algebra, where $S_n$ is a semisimple ideal of $L$ with $n$ simple components, and $M_k$ is an ideal such that $\mathcal{L}_{\id}(M_k)=C_k$. According to \emph{\cite{Be92_1}}, $M_k$ is faithful, so $L^*_{n,k}=M_k$ and $\mathcal{L}_{\id}(L_{n,k})=\mathcal{L}_{\id}(S_n)\times \mathcal{L}_{\id}(M_k)$ is isomorphic to $Q_n\times C_k$. In Figure~\ref{fig:qnxck} we have drawn $\mathcal{L}_{\id}(L_{n,k})$ for $n=0,1,2$.
\end{example}

\begin{figure}
  \centering
  \includegraphics[width=0.6\textwidth]{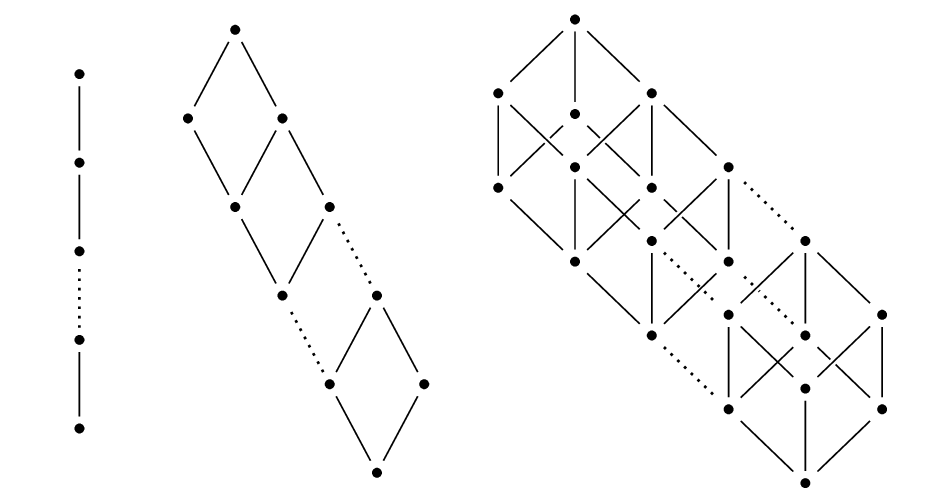}
  \caption{$Q_n\times C_k$ lattices for $n=0,1,2$.}
  \label{fig:qnxck}
\end{figure}

In Section 1, we have introduced the Jacobson radical of $L$, from now on $\mathcal{J}(L)$, as the intersection of all maximal ideals of $L$. We recall that:
\begin{equation}\label{eq:jacobsonradical}
  \Rad(L)^2\subseteq\mathcal{J}(L)=[L,\Rad(L)]=\Rad(L^2)=\Rad(L)\cap L^2\subseteq \Nil(L),
\end{equation}
where $\Rad(L)$ and $\Nil(L)$ are the solvable and nilpotent radicals of $L$. So, for any Levi decomposition $L=S\oplus \Rad(L)$, we get
\begin{equation}\label{eq:derivado}
  L^2=[L,L]=[S,S]+[L,\Rad(L)]=S\oplus \mathcal{J}(L).
\end{equation}
Hence,
\begin{itemize}
  \item[$\bullet$] $L$ is reductive iff $\mathcal{J}(L)=0$ iff $L=S\oplus \Z(L)$,
  \item[$\bullet$] $L$ is semisimple iff $\mathcal{J}(L)=0$ and $L^2=L\neq 0$,
  \item[$\bullet$] $L$ is solvable iff $L^2=\mathcal{J}(L)$ iff $S=0$. In this case, $L\neq L^2$ if $L\neq 0$.
\end{itemize}

A minimal ideal of $L$ is either simple or abelian. The socle of $L$, denoted by $\Soc(L)$, is the sum of all minimal ideals of $L$. The abelian socle of $L$, $\Asoc(L)$, is the sum of those minimal ideals that  are abelian, so $\Asoc(L)\subseteq \Nil(L)$ and $\Soc(L)=\Asoc(L)\oplus \Ssoc(L)$. Following~\cite{St70}*{Proposition 8}, any minimal ideal centralizes the nilradical, so $\Soc(L)\subseteq C_L(\Nil(L))$, equivalently, $[\Soc(L),\Nil(L)]=0$, and therefore $\Asoc(L)$ is contained in the centre of the nilradical, $\Z(\Nil(L))$. Remember that $\Ssoc(L)$ is contained in any Levi factor of $L$. The (adjoint) representation $\ad_{\Asoc(L)}\colon L\to \mathfrak{gl}(\Asoc(L))$ provides the Lie algebra of linear transformations $\Omega=\ad_{\Asoc(L)} L$ in $\Asoc(L)$. Since $\Omega$ is completely reducible, Lemma~\ref{completely-reducible} and its proof give us a clear structure of $\Omega$ as a Lie algebra of linear maps.

\begin{remark}\label{jacobsonreduced}
  For any decomposition $L=A\oplus B$, where $A$ and $B$ are ideals of $L$, if either $A$ is a sum of simple ideals or $A\subseteq \Z(L)$, then $\mathcal{J}(L)=\mathcal{J}(B)$.
\end{remark}

\begin{lemma}\label{basic-structure} Let $L$ be a nonzero Lie algebra, let $\mathcal{J}(L)$ be its Jacobson radical and let $S$ be a Levi factor of $L$ (the case $S=0$ is also possible). If $\mathcal{L}_{\id}(L)$ is finite, the following assertions hold:
  \begin{itemize}
    \item [\emph{(a)}] Either $L=L^2=S\oplus \mathcal{J}(L)$ with $S\neq 0$ and $\Rad(L)=\mathcal{J}(L)=\Nil(L)$ or $L=L^2\oplus \mathbb{F}a$ and $\Rad(L)=\mathcal{J}(L)\oplus \mathbb{F}a$. Moreover, $L^2=\mathcal{J}(L)=\Nil(L)$ in the case $S=0$ and $\mathcal{J}(L)\neq 0$.

    \item [\emph{(b)}] $\dim \Z(L)\leq 1$.

    \item [\emph{(c)}] $C_L(L^2)\nsubseteq L^2$ if and only if $L=L^2\oplus \mathbb{F}z$ and $C_L(L^2)=\Z(L)=\mathbb{F}z$. In this case, $L^2$ is a centreless Lie algebra, $\mathcal{L}_{\id}(L^2)$ is finite and $(L^2)^2=L^2$.

    \item [\emph{(d)}] $C_L(L^2)$ is an abelian ideal of $L$. Moreover, if $\Z_0(\Nil(L))$ is the trivial $\ad S$-module of $\Z(\Nil(L))$, then $\Z(L)\subseteq \Z_0(\Nil(L))\subseteq C_L(L^2)$. If $S=0$, $\Z_0(\Nil(L))=\Z(\Nil(L))$.

    \item [\emph{(e)}] Let $\Z_1(\Nil(L))$ be the sum of all nontrivial $\ad S$-modules of $\Z(\Nil(L))$, then $\Z(\Nil(L))=\Z_0(\Nil(L))\oplus \Z_1(\Nil(L))$ and $\Z_1(\Nil(L))\subseteq \mathcal{J}(L)$.
    \item [\emph{(f)}] Either $L=L^2\oplus \mathbb{F}z$ and $C_L(L^2)=\Z(L)=\mathbb{F}z=\Z_0(\Nil(L))$ or $\Z(\Nil(L))\subseteq \mathcal{J}(L)$.

    \item [\emph{(g)}] The ideals $I$ of $L$ such that $I\cap \mathcal{J}(L)=0$ are: the trivial ideal, any sum $I_s$ of simple ideals and, if $L=L^2\oplus \Z(L)$, we also have $\Z(L)=\mathbb{F}z$ and $I_s\oplus \mathbb{F}z$.


    \item [\emph{(h)}] If $L$ is a faithful and centreless Lie algebra, $I\cap \mathcal{J}(L)\neq 0$ for any nonzero ideal $I$ of $L$.

    \item [\emph{(i)}] $\Asoc(L)\subseteq \Z(\Nil(L))$ and either $\Asoc(L)=\Z(\Nil(L))=\Asoc(L)\cap \mathcal{J}(L)\oplus \mathbb{F}z$ and $\Z(L)=\mathbb{F}z=\Z_0(\Nil(L))$ or $\Z(\Nil(L))\subseteq \mathcal{J}(L)$.

    \item [\emph{(j)}] Any homomorphic image of $L$ has a finite number of ideals. So, for any ideal $I$ of $L$, the quotient Lie algebra $L/I$ satisfies \emph{(a)--(i)}.

  \end{itemize}
\end{lemma}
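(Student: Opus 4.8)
The plan is to establish the nine assertions in a bootstrapping order, \emph{(j)} and \emph{(b)} first, then \emph{(a)}, then the core fact \emph{(d)}, then \emph{(c)}, and finally \emph{(e)--(i)}; two elementary observations feed everything. (O1): an \emph{abelian} Lie algebra with finitely many ideals has dimension $\le 1$, because $\Sub\mathbb{F}^n$ is infinite for $n\ge 2$ over the infinite field $\mathbb{F}$ (Example~\ref{n-subspace}). (O2): a nonzero \emph{nilpotent} Lie algebra with finitely many ideals is $1$-dimensional, since $L/L^2$ is abelian with finitely many ideals so $\dim(L/L^2)\le 1$ by (O1); it cannot be $0$ (a nonzero nilpotent algebra has $L\ne L^2$), so it is $1$, whence $L$ is generated by one element and is therefore abelian, i.e.\ $L^2=0$. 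Part \emph{(j)} is then immediate: the ideals of $L/I$ correspond bijectively to the ideals of $L$ containing $I$, a finite set, so every nonzero homomorphic image of $L$ again satisfies the hypothesis and hence \emph{(a)--(i)}. Part \emph{(b)} follows because every subspace of $\Z(L)$ is an ideal of $L$, so $\Sub\Z(L)$ is a finite sublattice of $\mathcal{L}_{\id}(L)$ and $\dim\Z(L)\le 1$ by (O1).

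For \emph{(a)}, apply (O1) to the abelian quotient $L/L^2$: either $L=L^2$ or $L=L^2\oplus\mathbb{F}a$. If $L=L^2$, then $L=S\oplus\mathcal{J}(L)$ by \eqref{eq:derivado}; here $S\ne 0$ (otherwise $L=\mathcal{J}(L)=L^2$ would be solvable, forcing $L=0$), and $\Rad(L)=\Rad(L)\cap L=\Rad(L)\cap L^2=\mathcal{J}(L)$ by \eqref{eq:jacobsonradical}, so $\Rad(L)=\mathcal{J}(L)=\Nil(L)$. If $L\ne L^2$, the dimension identity $\dim(L/L^2)=\dim\Rad(L)-\dim\mathcal{J}(L)$ (from $L=S\oplus\Rad(L)$ and $L^2=S\oplus\mathcal{J}(L)$) gives $\Rad(L)=\mathcal{J}(L)\oplus\mathbb{F}a$ and $L=L^2\oplus\mathbb{F}a$. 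The ``moreover'' clause holds because, when $S=0\ne\mathcal{J}(L)$, the algebra $L$ is solvable and, by (O2), not nilpotent, so $\mathcal{J}(L)\subseteq\Nil(L)\subsetneq L=\Rad(L)$ with $\dim(L/\mathcal{J}(L))=1$, forcing $\Nil(L)=\mathcal{J}(L)$, while $L^2=\mathcal{J}(L)$ by \eqref{eq:derivado}.

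The key step is \emph{(d)}. Write $C=C_L(L^2)$, an ideal of $L$ satisfying $[L,C]\subseteq C\cap L^2=\Z(L^2)$; hence $[[C,C],[C,C]]\subseteq[\Z(L^2),\Z(L^2)]=0$, so $C$ is solvable and $C\subseteq\Rad(L)$. If $C\subseteq L^2$ then $C=C\cap L^2=\Z(L^2)$ is abelian. Otherwise, by \emph{(a)} we are in the case $L=L^2\oplus\mathbb{F}a$, and since $C\subseteq\Rad(L)$ but $C\not\subseteq\mathcal{J}(L)\subseteq L^2$ we may take the complement $a$ to lie \emph{inside} $C$; then $C=\Z(L^2)\oplus\mathbb{F}a$ and $[C,C]=[\Z(L^2),\Z(L^2)]+[a,\Z(L^2)]\subseteq[a,L^2]=0$. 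So $C$ is always an abelian ideal. The containments $\Z(L)\subseteq\Z_0(\Nil(L))\subseteq C$ follow from $\Z(L)\subseteq\Z(\Nil(L))$ with $[S,\Z(L)]=0$, and from $[\Z_0(\Nil(L)),L^2]\subseteq[\Z_0(\Nil(L)),S]+[\Z(\Nil(L)),\Nil(L)]=0$ (using $\mathcal{J}(L)\subseteq\Nil(L)$); the case $S=0$ is trivial. This is the step I expect to be the real obstacle: a priori one sees only that $C_L(L^2)$ is metabelian, and the two facts that rescue the argument — that it is in fact solvable (so sits inside $\Rad(L)$) and that the one-dimensional complement of $L^2$ from \emph{(a)} can be chosen inside it — are exactly what the rest of the lemma leans on.

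With \emph{(a)}, \emph{(b)}, \emph{(d)} in hand, the remaining parts combine these with representation theory of the semisimple Levi factor $S$. For \emph{(c)}: if $C\not\subseteq L^2$, the construction above produces $a\in C$ with $L=L^2\oplus\mathbb{F}a$, so $[a,L]=[a,L^2]=0$, i.e.\ $a\in\Z(L)$ and $\Z(L)=\mathbb{F}a$ by \emph{(b)}; moreover $[L,\Z(L^2)]=[L^2,\Z(L^2)]+[a,\Z(L^2)]=0$ forces $\Z(L^2)\subseteq\Z(L)\cap L^2=0$, so $C=\mathbb{F}a=\Z(L)$, $L^2$ is centreless, $(L^2)^2=S+[S,\mathcal{J}(L)]+\mathcal{J}(L)^2=S+\mathcal{J}(L)=L^2$ (using $\mathcal{J}(L)=[L,\Rad(L)]=[S,\mathcal{J}(L)]+\mathcal{J}(L)^2$, $a$ being central), and $\mathcal{L}_{\id}(L^2)$ embeds into $\mathcal{L}_{\id}(L)$ by $I\mapsto I\oplus\mathbb{F}a$, hence is finite. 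For \emph{(e)}: decompose the $\ad S$-module $\Z(\Nil(L))$ as $\Z_0(\Nil(L))\oplus\Z_1(\Nil(L))$; then $\Z_1(\Nil(L))=[S,\Z(\Nil(L))]\subseteq[L,\Rad(L)]=\mathcal{J}(L)$. For \emph{(f)}: if $\Z_0(\Nil(L))\subseteq L^2$ then $\Z_0(\Nil(L))\subseteq C\cap L^2=\Z(L^2)\subseteq\Rad(L)\cap L^2=\mathcal{J}(L)$, so $\Z(\Nil(L))\subseteq\mathcal{J}(L)$; otherwise $C\not\subseteq L^2$ and \emph{(c)} gives $L=L^2\oplus\mathbb{F}z$ with $C=\Z(L)=\mathbb{F}z=\Z_0(\Nil(L))$. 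For \emph{(i)}: in that exceptional case $\Z_1(\Nil(L))$ carries a trivial $\Rad(L)$-action (as $[\mathcal{J}(L),\Z(\Nil(L))]=0$ and $z$ is central), hence is a completely reducible $\ad L$-module whose irreducible summands are nontrivial minimal abelian ideals, giving $\Z_1(\Nil(L))\subseteq\Asoc(L)$ and $\Asoc(L)=\Z(\Nil(L))=(\Asoc(L)\cap\mathcal{J}(L))\oplus\mathbb{F}z$; in the non-exceptional case use the second alternative of \emph{(f)} together with $\Asoc(L)\subseteq\Z(\Nil(L))$. Finally, \emph{(g)} uses Lemma~\ref{lema-reduction-1}: an ideal $I=B\oplus A$ with $I\cap\mathcal{J}(L)=0$ has $A=I\cap\Rad(L)$ with $\dim A\le 1$ and $[L,A]\subseteq A\cap\mathcal{J}(L)=0$, so $A\subseteq\Z(L)$; $[B,\Rad(L)]\subseteq A\cap\mathcal{J}(L)=0$ makes $B$ a sum of simple ideals of $L$; and $A\ne 0$ occurs only if $\Z(L)=\mathbb{F}z$ and $L=L^2\oplus\Z(L)$, in which case $I=B\oplus\mathbb{F}z$. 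Part \emph{(h)} is \emph{(g)} under the extra hypotheses that $L$ has no simple ideals and $\Z(L)=0$.
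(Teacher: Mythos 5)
Your proof is correct and follows essentially the same strategy as the paper's: finiteness applied to the abelian quotients $L/L^2$ and $\Z(L)$, the identities \eqref{eq:jacobsonradical}--\eqref{eq:derivado}, Weyl's theorem for (e), and Lemma~\ref{lema-reduction-1} for (g). The only deviation is organizational --- you establish (d) before (c) via the metabelian-hence-solvable detour for $C_L(L^2)$, where the paper gets (c) directly by noting that any $a\in C_L(L^2)\setminus L^2$ satisfies $[a,L]=[a,L^2]=0$ and is therefore central --- but both routes are sound.
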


\begin{proof}
  The final assertion (j) is clear. Since any subspace contained in the centre of $L$ is an ideal, the dimension of $\Z(L)$ is at most $1$, thus (b) holds. On the other hand, the quotient $L/L^2$ is abelian, so $\Z(L/L^2)=L/L^2$. Since $\mathcal{L}_{\id}(L/L^2)$ is finite, the previous reasoning on the centre yields item (a). We point out that $L$ nilpotent and $L^2\neq 0$ imply $\dim L/L^2\geq 2$, so the final assertion in this item is clear.
  Now, we will prove the equivalence in item~(c). The converse is straightforward. Assume then $C_L(L^2)\nsubseteq L^2$, thus $L=L^2\oplus \mathbb{F}a$ for any $a\in C_L(L^2)\setminus L^2$ by (a). Since $[a,L]=0$, we get $\Z(L)=\mathbb{F}a$. For any $y\in C_L(L^2)$, we have that $[y,L]=[y,L^2]=0$ and therefore $C_L(L^2)=\Z(L)$. The final assertion in (c) is clear. First half in item~(d) is obvious in the case $C_L(L^2)\subseteq L^2$ and it follows from (c) if $C_L(L^2)\nsubseteq L^2$. The second half for $S\neq0$ is a consequence of $L^2=S\oplus \mathcal{J}(L)$ and $\mathcal{J}(L)\subseteq \Nil(L)$. If $S=0$ it is straightforward. Since $\Z(\Nil(L))$ is an $\ad S$-module, the decomposition of $\Z(\Nil(L))=Z_0\oplus Z_1$ in item~(e) follows from Weyl's Theorem. Moreover, $[S,I]=I$ for any $I$ $\ad S$-irreducible and nontrivial module, thus $Z_1=[S,Z_1]\subseteq[S,\Rad(L)] \subseteq\mathcal{J}(L)$.

  Next we will prove (f). In the case $\mathcal{J}(L)=0$, $L$ is reductive with centre of dimension up to $1$ and the result follows. Assume then $\mathcal{J}(L)\neq 0$. If $L$ is a solvable algebra, $L^2=\mathcal{J}(L)\neq 0$ and $\Z(\Nil(L))\subseteq \mathcal{J}(L)=\Nil(L)$ by (a). Otherwise $S\neq 0$ and consider the $\ad S$-module decomposition $Z=\Z(Nil(L))=Z_0\oplus Z_1$ in item~(e). Since$$Z\cap \mathcal{J}(L)=(Z_0\oplus Z_1)\cap \mathcal{J}(L)=Z_0\cap\mathcal{J}(L)\oplus Z_1,$$if $Z\nsubseteq \mathcal{J}(L)$ there exists $x\in Z_0\setminus \mathcal{J}(L)$. By applying~(d) we get $Z_0\subseteq C_L(L^2)\subseteq \Rad(L)$, thus $C_L(L^2)\nsubseteq\mathcal{J}(L)=\Rad(L)\cap L^2$. Therefore $C_L(L^2)\nsubseteq L^2$, and the result follows from items~(c) and (d).

  Now, let $I=I\cap S\oplus I\cap \Rad(L)$ be an ideal such that $I\cap \mathcal{J}(L)=0$. Since $[I\cap S,\Rad(L)]\subseteq[I,\Rad(L)]\subseteq I\cap \mathcal{J}(L)=0$, the subspace $I\cap S$ is either null or a semisimple ideal of $L$. Then, $I\cap S$ decomposes as a direct sum of simple ideals of $L$ if $I\cap S\neq 0$. In the same way, $[I\cap \Rad(L),L]\subseteq I\cap \mathcal{J}(L)=0$ and therefore $I\cap \Rad(L)\subseteq \Z(L)$. Then, either $I\cap \Rad(L)=0$ or $I\cap \Rad(L)=\Z(L)=\mathbb{F}z\nsubseteq \mathcal{J}(L)$ and $L=L^2\oplus \mathbb{F}z$ by applying (f). Hence item~(g) holds and (h) follows as a corollary.

  Finally, to prove (i) we recall that $\Asoc(L)\subseteq \Z(\Nil(L))=Z_0\oplus Z_1$. Assume $\Z(\Nil(L))$ is not contained in $\mathcal{J}(L)$. By (f), $L=S\oplus \mathcal{J}(L)\oplus \mathbb{F}z$ and $Z_0=\Z(L)=\mathbb{F}z$. Then, using $[\mathcal{J}(L), Z_1]=0$ we get that each $\ad S$-irreducible module of $Z_1$ is a minimal abelian ideal. Thus $\Asoc(L)=\Z(\Nil(L))$ and (i) holds.
\end{proof}

\begin{lemma}\label{reduction-2}
  Let $L=M\oplus \mathbb{F}z$, where $M$ is an ideal of $L$ such that $M^2=M$ and $z\in \Z(L)$. Then, we have that $L^2=M^2=S\oplus \mathcal{J}(L)$, where $S$ is a nonzero Levi factor of $M$, $\mathcal{J}(L)=\mathcal{J}(M)$ and we also have that $\Rad(L)=\mathbb{F}z\oplus\mathcal{J}(L)$. Moreover, the ideals of $L$ are of one of the following types:
  \begin{itemize}
    \item[\emph{(i)}]Any ideal $I_M$ of $M$,
    \item[\emph{(ii)}]$I_M\oplus \mathbb{F}z$ where $I_M$ is an ideal of $M$,
    \item[] or the trivial $\ad S$-module $\mathcal{J}_0(M)$ of $\mathcal{J}(M)$ is nonzero and
    \item[\emph{(iii)}]$\mathbf{I}(I_M,x)= I_M\oplus\mathbb{F}(x+z)$ where $I_M$ is an ideal of $M$, $x\in \mathcal{J}_0(M)-I_M$ and $[x,\mathcal{J}(M)]\subseteq I_M$.
  \end{itemize}
  For a fixed ideal $\mathbf{I}(I_M,x)$ of type \emph{(iii)}, the subspaces $\mathbf{I}(I_M, \alpha x)$ are ideals for all $\alpha \in \mathbb{F}$ and
  $\mathbf{I}(I_M,\alpha x)=\mathbf{I}(I_M,\beta x)$ if and only if $\alpha=\beta$.
\end{lemma}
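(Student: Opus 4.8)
The plan is to first determine the coarse structure of $L$, then classify ideals according to whether they sit inside $M$, and finally treat the scaling assertion.

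\emph{Coarse structure.} Since $M^2=M$ (and $M\neq 0$, as is needed for the claimed nonzero Levi factor), $M$ is not solvable, so it has a nonzero Levi factor $S$; moreover $\mathcal{J}(M)=\Rad(M)\cap M^2=\Rad(M)$ by \eqref{eq:jacobsonradical}, and \eqref{eq:derivado} applied to $M$ gives $M=M^2=S\oplus\mathcal{J}(M)$. I would then observe that $\mathcal{J}(M)\oplus\mathbb{F}z$ is a solvable ideal of $L=S\oplus\mathcal{J}(M)\oplus\mathbb{F}z$ — it is $\ad S$-invariant because $\mathcal{J}(M)$ is an ideal of $M\supseteq S$, and $z$ is central — with semisimple quotient $S$, hence $\Rad(L)=\mathbb{F}z\oplus\mathcal{J}(M)$. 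Since $\mathbb{F}z\subseteq\Z(L)$, Remark~\ref{jacobsonreduced} gives $\mathcal{J}(L)=\mathcal{J}(M)$, and $L^2=[M,M]=M^2=S\oplus\mathcal{J}(L)$.

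\emph{Reducing the ideal classification.} Let $I$ be an ideal of $L$. If $I\subseteq M$ then, since $[L,I]=[M,I]\subseteq I$, being an ideal of $L$ contained in $M$ is the same as being an ideal of $M$; these are the ideals of type~(i). If $I\not\subseteq M$, the projection $L\to\mathbb{F}z$ is nonzero, hence onto, on $I$, so $I_M:=I\cap M$ is an ideal of $M$ and $I=I_M\oplus\mathbb{F}(x+z)$ for some $x\in M$. Since $[L,x+z]=[M,x]\subseteq M$ and $I\cap M=I_M$, the condition that $I$ be an ideal reduces to the single requirement $[M,x]\subseteq I_M$. The crux is then the normalisation claim: \emph{if $x\in M$ satisfies $[M,x]\subseteq I_M$, then $x\in\mathcal{J}_0(M)+I_M$}, where $\mathcal{J}_0(M)$ is the trivial $\ad S$-submodule of $\mathcal{J}(M)$.

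\emph{Proof of the claim and conclusion.} Decompose $x=s+j_0+j_1$ along $S\oplus\mathcal{J}_0(M)\oplus\mathcal{J}_1(M)$, where $\mathcal{J}_1(M)=[S,\mathcal{J}(M)]$ is the sum of the nontrivial $\ad S$-submodules of $\mathcal{J}(M)$ (Weyl's theorem). Comparing the $S$- and $\mathcal{J}(M)$-components in $[S,x]\subseteq I_M$ yields $[S,s]\subseteq I_M\cap S$ and $[S,j_1]\subseteq I_M\cap\mathcal{J}(M)=:A$. For $s$: the ideal $I_M\cap S$ of $S$ has a complementary ideal $B'$ in $S$, and the $B'$-component of $s$ is then central in the semisimple algebra $B'$, hence zero, so $s\in I_M$. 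For $j_1$: the $\ad S$-submodule $W$ generated by $j_1$ lies in $\mathcal{J}_1(M)$, so it has no trivial summand and $W=[S,W]$; since $A$ is $\ad S$-invariant and $[S,j_1]\subseteq A$, iterating gives $W\subseteq\mathbb{F}j_1+A$, whence $W=[S,W]\subseteq A$ and $j_1\in A\subseteq I_M$. Thus $x\equiv j_0\pmod{I_M}$ with $j_0\in\mathcal{J}_0(M)$, so $I=I_M\oplus\mathbb{F}(j_0+z)$. If $j_0\in I_M$ this is $I_M\oplus\mathbb{F}z$, type~(ii); otherwise, from $[M,s],[M,j_1]\subseteq I_M$ one gets $[M,j_0]\subseteq I_M$, which (since $[S,j_0]=0$) says $[j_0,\mathcal{J}(M)]\subseteq I_M$, so $I=\mathbf{I}(I_M,j_0)$ is of type~(iii). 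Conversely, subspaces of the three types are ideals — for type~(iii), $[L,x+z]=[\mathcal{J}(M),x]\subseteq I_M$ — and for fixed type~(iii) data $(I_M,x)$ the same computation shows each $\mathbf{I}(I_M,\alpha x)$ is an ideal; if $\mathbf{I}(I_M,\alpha x)=\mathbf{I}(I_M,\beta x)$ then $\alpha x+z=i+\gamma(\beta x+z)$ with $i\in I_M$, and comparing $\mathbb{F}z$-components forces $\gamma=1$, so $(\alpha-\beta)x=i\in I_M$; since $x\notin I_M$ we conclude $\alpha=\beta$.

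I expect the normalisation claim to be the one real obstacle: the point is that the ``extra'' generator $x+z$ of an ideal not contained in $M$ can always have its representative $x$ normalised into $\mathcal{J}_0(M)$, and this uses both the centrelessness of semisimple Lie algebras (to absorb the Levi part of $x$) and Weyl's theorem together with the fact that an $\ad S$-module generated by a single vector inside $\mathcal{J}_1(M)$ is recovered by bracketing with $S$ (to absorb $j_1$ into $I_M$). The remaining verifications are routine bookkeeping with the decomposition $L=S\oplus\mathcal{J}(M)\oplus\mathbb{F}z$.
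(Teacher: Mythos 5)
Your proof is correct, and its skeleton (split on whether $I\subseteq M$ and whether $z\in I$, then normalise the extra generator $x+z$ so that $x\in\mathcal{J}_0(M)$) is the same as the paper's; the difference lies in how the normalisation is carried out. The paper notes that $I$ and $I\cap M$ are $\ad S$-modules, so by Weyl's theorem $I\cap M$ admits a one-dimensional $\ad S$-invariant complement in $I$, necessarily a trivial module, whence its generator lies in $C_L(S)\subseteq\Rad(L)=\mathcal{J}(L)\oplus\mathbb{F}z$ and can be rescaled to $x+z$ with $x\in\mathcal{J}_0(M)$ --- a two-line argument. You instead take an arbitrary representative $x=s+j_0+j_1$ and absorb $s$ and $j_1$ into $I_M$ by hand, using that the complementary ideal $B'$ of $I_M\cap S$ in $S$ is centreless and that $[S,W]=W$ for the $\ad S$-submodule generated by $j_1$ inside $\mathcal{J}_1(M)$. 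Both routes are sound; yours is longer but more explicit, showing that the originally chosen $x$ is congruent modulo $I_M$ to its $\mathcal{J}_0(M)$-component. One small point: your ``comparing the $S$- and $\mathcal{J}(M)$-components'' step tacitly uses the splitting $I_M=(I_M\cap S)\oplus(I_M\cap\mathcal{J}(M))$ of an ideal along the Levi decomposition of $M$; this is exactly Lemma~\ref{lema-reduction-1} and should be cited there. The coarse-structure and scaling assertions are handled as in the paper, which dismisses them as straightforward.
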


\begin{proof}
  Let $I$ be any ideal of $L$. In the case that $z\in I$, we get \emph{(ii)}. Assume then $z\notin I$ and consider the projection $p$ of the ideal $I$ on $\mathbb{F}z$. Note that $p(I)=0$ is equivalent to the fact that $I$ is an ideal of $M$, so we arrive at (i). Otherwise, $\ker p=I\cap M$ and $p(I)=\mathbb{F}z$. Hence, there exists a nonzero element $a\in I$ such that $a=x+z$ with $x\in M\setminus I$. Then, we can decompose $I=I\cap M \oplus \mathbb{F}a$, $[a,L]=[a,M]\subseteq I\cap M$. Since $I\cap M$ and $I$ are $\ad S$-modules, we can assume w.l.o.g. that $a\in C_L(S)\subseteq \Rad(L)=\mathcal{J}(L)\oplus \mathbb{F}z$. Hence, $x\in \mathcal{J}(L)$ and $[S,x]=0$ and therefore the ideal $I$ is as in (iii). The first assertion of $L^2$ and the final part about ideals of type (iii) are straightforward.
\end{proof}

In the sequel, we will work on faithful Lie algebras ($\Ssoc(L)=0$) with a finite number of ideals and nontrivial Jacobson radical. The nonfaithful case is covered by Corollary~\ref{reduction-1} and the case of null Jacobson radical by Corollary~\ref{cor-booleano}. We also consider the intersection of all maximal subalgebras of $L$, denoted by $\varphi(L)$, which is known as the \emph{Frattini subalgebra} of $L$. Following~\cite{To73}*{Theorem~6.5 and Corollary~7.7}, in characteristic zero, $\varphi (L)$ is a characteristic ideal and
\begin{equation}\label{eq:frattini}
  \Nil(L)^2\subseteq \varphi(L)\subseteq \Rad(L)^2.
\end{equation}
\begin{proposition}\label{general-est1}
  Let $L$ be a faithful Lie algebra with nonzero Jacobson radical $\mathcal{J}(L)$, and let $S$ be a Levi factor of $L$. If $\mathcal{L}_{\id}(L)$ is finite, then $\dim \Z(L)\leq 1$ and $L$ satisfies one of the following decomposition features:
  \begin{itemize}

    \item [\emph{(I)}]$L=L^2=S\oplus \mathcal{J}(L)$ and $S\neq 0$, $\Rad(L)=\Nil(L)=\mathcal{J}(L)$ and the representation induced on the quotient $\mathcal{J}(L)/\mathcal{J}(L)^2$ by the adjoint representation $\ad_{\mathcal{J}(L)} S$ has non trivial and non isomorphic modules. Moreover:
          \begin{itemize}
            \item[$\circ$] $\Asoc(L)=\Z(\Nil(L))=I_1\oplus \dots \oplus I_r$, where $I_j$ are irreducible and non isomorphic $\ad_{\Z(\Nil(L))} S$-modules and,
            \item[$\circ$] $\Z(L)=Z_0(\Nil(L))=C_L(L^2)$.

          \end{itemize}
    \item [\emph{(II)}]$L=M\oplus \mathbb{F}z$, where $\Z(L)=\mathbb{F}z=C_L(L^2)$ and $M=S\oplus \mathcal{J}(L)=S\oplus \mathcal{J}(M)$ is an algebra as in item \emph{(I)} such that $\mathcal{J}_0(M)=0$. In this case, $\mathcal{L}_{\id}(L)=\mathcal{L}_{\id}(M)\times \mathcal{L}_{\id}(\mathbb{F}z)$.

    \item [\emph{(III)}] $C_L(L^2)\subseteq \mathcal{J}_0(L)$ and $L=L^2\oplus \mathbb{F}a=S\oplus \mathcal{J}(L)\oplus\mathbb{F}a$ with $S\neq 0$, $\Rad(L)=\mathcal{J}(L)\oplus \mathbb{F}a$ and $[S,a]=0$. In this case, $\Rad(L)^2=\mathcal{J}(L)^2+[\mathcal{J}(L),a]$, $[\mathcal{J}(L),a]\neq 0$ and $L/\Rad(L)^2$ is as in \emph{(II)} if $\mathcal{J}(L)\neq \Rad(L)^2$. Moreover:
          \begin{itemize}
            \item[$\circ$] $\Asoc(L)\subseteq \Z(\Nil(L))\subseteq \mathcal{J}(L)$.
            \item[$\circ$] The minimal abelian ideals of $L$ are just the irreducible $\ad (S\oplus \mathbb{F}a)$-subspaces of $\Z(\Nil(L))$. In particular, $\Asoc(L)=I_1\oplus \dots \oplus I_r$ where $I_1,\dots ,I_r$ are                   irreducible and non isomorphic $\ad (S\oplus \mathbb{F}a)$-subspaces.
            \item[$\circ$] The map $\ad_{\Asoc(L)}a$ is semisimple and the minimum polynomial of $\ad_{I_j} a\colon I_j\to I_j$, is irreducible. Moreover, if $\ad_{I_j} a=\lambda_j\cdot \id_{I_j}$, then $I_j$ is an irreducible $\ad S$-subspace.
            \item[$\circ$] $\Z(L)=C_{Z_0(\Nil(L))}(a)=C_L(L^2)\cap \ker \ad a$.
            \item[$\circ$] If $\Nil(L)=\mathcal{J}(L)$, then $Z_0(\Nil(L))=C_L(L^2)$ and ${\ad}_{\mathcal{J}(L)/\mathcal{J}(L)^2} a$ is not a nilpotent map.
            \item[$\circ$] If $\Nil(L)=\mathcal{J}(L)\oplus \mathbb{F}a=\Rad(L)$, then $\ad_L a$ is a nilpotent map and $\mathcal{J}_0(L)\subseteq \varphi(L)$. In this case, $\Asoc(L)=\Z(\Nil(L))$, $Z_0(\Nil(L))=\Z(L)$ and the irreducible $\ad S$-modules of $Z_1(\Nil(L))$ are the noncentral minimal ideals of $L$.
          \end{itemize}
    \item [\emph{(IV)}] $C_L(L^2)\subseteq \mathcal{J}(L)$ and $L=\mathcal{J}(L)\oplus \mathbb{F}a$ is a solvable Lie algebra with $\Nil(L)=\mathcal{J}(L)=L^2$. In this case, the map induced on the quotient $\mathcal{J}(L)/\mathcal{J}(L)^2$ by $\ad_{\mathcal{J}(L)} a$ is one-to-one. Moreover:
          \begin{itemize}
            \item[$\circ$] For any vector space $T$ such that $\mathcal{J}(L)=\mathcal{J}(L)^2\oplus T$, we have $\mathcal{J}(L)=\mathcal{J}(L)^2\oplus [T,a]$.
            \item[$\circ$] The minimal abelian ideals of $L$ are just the irreducible $\ad a$-subspaces of $\Z(\Nil(L))$.
            \item[$\circ$] $\Asoc(L)=Z_{\pi_1}^*\oplus\dots \oplus Z_{\pi_r}^*$, where $\pi_j=\pi_j(t)$ are the distinct irreducible polynomials of the minimum polynomial of $\ad a$ and $Z_{\pi_j}^*=\{x\in \Z(\Nil(L)):\pi_j(\ad a)(x)=0\}$. In the case $Z_{\pi_j}^*\neq 0$, $\ad a$ acts cyclically on $Z_{\pi_j}^*$, so $\dim Z_{\pi_j}^*=\degr \pi_j$.
            \item[$\circ$] $\Z(\Nil(L))=C_L(L^2)$.
            \item[$\circ$] $\Z(L)=C_{\Z(\Nil(L))}(a)=Z^*_\pi$ where $\pi(t)=t$.

          \end{itemize}
  \end{itemize}
  In addition we have:
  \begin{itemize}
    \item[$\circ$] $\mathcal{J}(L)^2\subseteq\varphi(L)\subseteq \mathcal{J}(L)^2+[\mathcal{J}(L),a]\subseteq \mathcal{J}(L)$ and $\varphi(L)$ is exactly $\mathcal{J}(L)^2$ for any Lie algebra different from items \emph{(III)} and \emph{(IV)}. If $L$ is as in item \emph{(III)} and $\Rad(L)=\Nil(L)$, then $\varphi(L)=\mathcal{J}(L)^2+[\mathcal{J}(L),a]\neq \mathcal{J}(L)$.

    \item[$\circ$] If $0\neq \Z(L)=\mathbb{F}z$, then either $L$ is as in item \emph{(II)} or $z\in \varphi(L)$.

  \end{itemize}

\end{proposition}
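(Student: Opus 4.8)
The plan is to derive everything from Lemma~\ref{basic-structure} and Lemma~\ref{completely-reducible}, together with the inclusions (\ref{eq:jacobsonradical}) and (\ref{eq:frattini}). Item~(b) of Lemma~\ref{basic-structure} is literally $\dim\Z(L)\le1$, and items (a), (c), (f), (g) of that lemma supply the case skeleton: by (a) either $L=L^2=S\oplus\mathcal{J}(L)$ with $S\ne0$ (perfectness excludes solvability) and $\Rad(L)=\Nil(L)=\mathcal{J}(L)$, which is (I), or $L=L^2\oplus\mathbb{F}a$ with $\Rad(L)=\mathcal{J}(L)\oplus\mathbb{F}a$; in the latter case (c) isolates the alternative $C_L(L^2)\nsubseteq L^2$, i.e.\ $C_L(L^2)=\Z(L)=\mathbb{F}z$ with $L^2$ perfect and centreless, which is (II), while $C_L(L^2)\subseteq L^2$ splits, by (f), (g) and by $S\ne0$ versus $S=0$, into (III) and the solvable case (IV). For (II) one first checks that $M:=L^2$ is faithful (a simple ideal of $M$ would be an ideal of $L=M\oplus\mathbb{F}z$, hence lie in $\Ssoc(L)=0$), centreless and perfect, with $\mathcal{J}(M)=\mathcal{J}(L)\ne0$ by Remark~\ref{jacobsonreduced}; Lemma~\ref{reduction-2} applied to $L=M\oplus\mathbb{F}z$ lists the ideals of $L$, and finiteness of $\mathcal{L}_{\id}(L)$ rules out the infinite type-(iii) family $\mathbf{I}(\mathcal{J}(M)^{k+1},\alpha x)$ attached to a suitable $0\ne x\in\mathcal{J}_0(M)$, so $\mathcal{J}_0(M)=0$ and $\mathcal{L}_{\id}(L)=\mathcal{L}_{\id}(M)\times\mathcal{L}_{\id}(\mathbb{F}z)$; that $M$ is of type (I) is then the module analysis below applied to $M$.

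The module-theoretic content of (I)--(IV) runs through Lemma~\ref{completely-reducible}. The common device is: pass to the appropriate module and $\Omega$ --- $\ad S$ on $\mathcal{J}(L)/\mathcal{J}(L)^2$ in (I), (II), and $\ad(S\oplus\mathbb{F}a)$ on $\Z(\Nil(L))$ in (III), while in (IV) the nilradical is $\mathcal{J}(L)$ so $\ad L$ acts on $\Z(\Nil(L))$ through $\mathbb{F}\ad a$ alone --- and observe that $\Sub_\Omega$ embeds as a subposet of $\mathcal{L}_{\id}(L)$, hence is finite. For this one uses, in the $\mathcal{J}(L)/\mathcal{J}(L)^2$ case, that $[\mathcal{J}(L),\mathcal{J}(L)]=\mathcal{J}(L)^2$ makes any lift $W$ with $\mathcal{J}(L)^2\subseteq W\subseteq\mathcal{J}(L)$ of an $\ad S$-submodule an ideal (with $[S,a]=0$ handling the $\mathbb{F}a$ component in (III)), and in the $\Z(\Nil(L))$ case that minimal ideals centralize $\Nil(L)\supseteq\mathcal{J}(L)$, so $\Omega$-invariant subspaces of $\Z(\Nil(L))$ are abelian ideals, minimal when $\Omega$-irreducible. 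As $S$ is semisimple, $\Sub_{\ad S}$ is complemented by Weyl's theorem, so Lemma~\ref{completely-reducible} yields the asserted multiplicity-freeness of the $\ad S$-modules; the non-triviality of those modules in (I), on the other hand, is not a finiteness fact but follows from $\mathcal{J}(L)=[L,\Rad(L)]$ in (\ref{eq:jacobsonradical}): with $\Rad(L)=\Nil(L)=\mathcal{J}(L)$, the module $\mathcal{J}(L)/\mathcal{J}(L)^2$ equals its own $\ad S$-image, and a completely reducible module coinciding with its $S$-image has no trivial constituent. The same identity, read through $\mathbb{F}\ad a$, gives in (IV) that the induced ${\ad}_{\mathcal{J}(L)/\mathcal{J}(L)^2} a$ is onto, hence one-to-one, whence $\mathcal{J}(L)=\mathcal{J}(L)^2\oplus[T,a]$ for any complement $T$; and the $\mathbb{F}\ad a$-invariant-subspace lattice of $\Z(\Nil(L))$ being finite over an infinite field forces $\ad a$ semisimple there with multiplicity-free rational decomposition, producing $\Asoc(L)=\Z(\Nil(L))=\bigoplus_j Z^*_{\pi_j}$ with $\ad a$ cyclic on each factor and $\dim Z^*_{\pi_j}=\degr\pi_j$, the $\pi(t)=t$ part being $C_{\Z(\Nil(L))}(a)=\Z(L)$. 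The remaining identifications of $\Z(L)$ come from $\Z(L)=C_L(L^2)$ in (I) via Lemma~\ref{basic-structure}(c),(d) and, when a rank-one complement $a$ is present, from $\Z(L)=C_L(L)=C_L(L^2)\cap\ker\ad a$ (using $L^2=S\oplus\mathcal{J}(L)$ from (\ref{eq:derivado})) together with $Z_0(\Nil(L))\subseteq C_L(L^2)$ from Lemma~\ref{basic-structure}(d).

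Case (III), $L=S\oplus\mathcal{J}(L)\oplus\mathbb{F}a$ with $[S,a]=0$ (and $[\mathcal{J}(L),a]\ne0$, since otherwise $a$ would centralize $L^2$ and lie in $C_L(L^2)\subseteq\mathcal{J}(L)\subseteq L^2$), carries the extra weight. Everything hinges on whether $\ad_L a$ is nilpotent, which by Engel-type facts is equivalent to $\Rad(L)$ being nilpotent. If $\ad_L a$ is nilpotent, then $a\in\Nil(L)=\Rad(L)$, so $\Z(\Nil(L))$ centralizes $a$ and $\ad a$ acts trivially on it; thus $\ad L$ acts on $\Z(\Nil(L))$ through $\ad S$ only, giving $\Asoc(L)=\Z(\Nil(L))$ with a multiplicity-free $\ad S$-decomposition, $Z_0(\Nil(L))=\Z(L)$ and $Z_1(\Nil(L))$ accounting for the noncentral minimal ideals, while $[\mathcal{J}(L),a]\subseteq\Nil(L)^2\subseteq\varphi(L)$ by (\ref{eq:frattini}) yields both $\mathcal{J}_0(L)\subseteq\varphi(L)$ (via the no-trivial-constituent argument applied to $\mathcal{J}(L)/\Rad(L)^2$) and $\varphi(L)=\mathcal{J}(L)^2+[\mathcal{J}(L),a]$. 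If $\ad_L a$ is not nilpotent, then $\Nil(L)=\mathcal{J}(L)$, $a\notin\Nil(L)$, so $\ad a$ may act nontrivially on $\Z(\Nil(L))$ and the relevant $\Omega$ is genuinely $\ad(S\oplus\mathbb{F}a)$; here one decomposes $\Z(\Nil(L))$ into $\ad S$-isotypic components, notes that $\ad a$ is $\ad S$-equivariant so restricts to each component (via Schur's lemma, as an operator over the relevant division algebra), and uses finiteness of the $\ad a$-invariant-subspace lattice on each component (every such subspace being an ideal) to conclude $\ad_{\Z(\Nil(L))}a$ semisimple with multiplicity-free action --- whence $\Asoc(L)=I_1\oplus\dots\oplus I_r$ with the $I_j$ irreducible non-isomorphic $\ad(S\oplus\mathbb{F}a)$-subspaces, each $\ad_{I_j}a$ having irreducible minimum polynomial, and $\ad_{I_j}a=\lambda_j\cdot\id_{I_j}$ forcing $I_j$ $\ad S$-irreducible; finally $Z_0(\Nil(L))=C_L(L^2)$ and ${\ad}_{\mathcal{J}(L)/\mathcal{J}(L)^2} a$ non-nilpotent come from $\mathcal{J}(L)=[L,\Rad(L)]$ read modulo $\mathcal{J}(L)^2$, where $\ad S$ and $\ad a$ together act surjectively and respect the $\ad S$-decomposition because $[S,a]=0$.

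The Frattini clauses drop out of (\ref{eq:jacobsonradical}) and (\ref{eq:frattini}): since $\Rad(L)=\mathcal{J}(L)\oplus\mathbb{F}a$ and $[a,a]=0$, one has $\Rad(L)^2=\mathcal{J}(L)^2+[\mathcal{J}(L),a]$ (the second term vanishing in (I), where $\Rad(L)=\mathcal{J}(L)$, and in (II), where the rank-one complement is central), so always $\mathcal{J}(L)^2\subseteq\varphi(L)\subseteq\Rad(L)^2\subseteq\mathcal{J}(L)$; outside (III) and (IV) the sandwich collapses to $\varphi(L)=\mathcal{J}(L)^2$, while in (III) with $\Rad(L)=\Nil(L)$ the reverse inclusion $\mathcal{J}(L)^2+[\mathcal{J}(L),a]\subseteq\Nil(L)^2\subseteq\varphi(L)$ holds and $\varphi(L)\ne\mathcal{J}(L)$ because the nilpotent operator $\ad_{\mathcal{J}(L)/\mathcal{J}(L)^2}a$ cannot be onto a nonzero space. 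Finally, for $0\ne\Z(L)=\mathbb{F}z$: if $L$ is not of type (II), Lemma~\ref{basic-structure}(c) gives $C_L(L^2)\subseteq L^2$, hence $z\in\Z(L)\subseteq C_L(L^2)\cap\Rad(L)\subseteq L^2\cap\Rad(L)=\mathcal{J}(L)$; a maximal subalgebra $H$ with $z\notin H$ would satisfy $H\oplus\mathbb{F}z=L$, hence be a codimension-one ideal containing $L^2$, which is impossible when $L=L^2$ and forces $H=L^2$ (so $z\notin L^2$) when $L\ne L^2$, contradicting $z\in\mathcal{J}(L)$; thus $z\in\varphi(L)$. The step I expect to be the main obstacle is exactly the transfer in case (III) when $\Nil(L)=\mathcal{J}(L)$: running the finiteness-of-ideals argument on each $\ad S$-isotypic component of $\Z(\Nil(L))$ together with the Schur bookkeeping to extract semisimplicity and multiplicity-freeness of $\ad_{\Z(\Nil(L))}a$, and more generally setting up, in each branch, the precise lifting dictionary that makes finiteness of $\mathcal{L}_{\id}(L)$ exploitable through Lemma~\ref{completely-reducible}.
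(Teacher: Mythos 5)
Your overall architecture is the paper's: the case split (I)--(IV) comes from items (a) and (c) of Lemma~\ref{basic-structure}, case (II) is settled via Lemma~\ref{reduction-2} and the infinite family of ideals attached to a nonzero $x\in\mathcal{J}_0(M)$, and the multiplicity-freeness statements are meant to come from Lemma~\ref{completely-reducible}. There is, however, a genuine error in how you invoke that lemma in case (IV) and in the subcase $\Nil(L)=\mathcal{J}(L)$ of case (III). You assert that finiteness of the lattice of $\ad a$-invariant subspaces of $\Z(\Nil(L))$ ``forces $\ad a$ semisimple there with multiplicity-free rational decomposition, producing $\Asoc(L)=\Z(\Nil(L))=\bigoplus_j Z^*_{\pi_j}$''. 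Finiteness alone does not force semisimplicity: a single non-semisimple Jordan block has a finite (chain) lattice of invariant subspaces. Concretely, the three-dimensional solvable algebra $L=\langle x,y\rangle\oplus\mathbb{F}a$ with $[a,x]=x$, $[a,y]=x+y$, $[x,y]=0$ is of type (IV) with exactly four ideals, yet $\ad a$ is not semisimple on $\Z(\Nil(L))=\mathcal{J}(L)=\langle x,y\rangle$ and $\Asoc(L)=\mathbb{F}x\subsetneq\Z(\Nil(L))$. Lemma~\ref{completely-reducible} requires the invariant-subspace lattice to be finite \emph{and complemented}; complementedness is automatic for $\ad S$-invariant subspaces (Weyl) and on $\Asoc(L)$ (a sum of minimal ideals), but not for $\ad a$-invariant subspaces of $\Z(\Nil(L))$. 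This is why the paper applies the lemma only to $\Omega=\ad_{\Asoc(L)}(S\oplus\mathbb{F}a)$, resp. $\mathbb{F}\cdot\ad_{\Asoc(L)}a$, acting on $\Asoc(L)$, and why the proposition claims semisimplicity of $\ad a$ only on $\Asoc(L)$ and identifies $\Asoc(L)$ with $\bigoplus_j Z^*_{\pi_j}$, which is in general a proper subspace of $\Z(\Nil(L))$. Your derivation of the $\Asoc(L)$ bullets in (III) and (IV) must be rerouted through $\Asoc(L)$: each $Z^*_{\pi_j}$ is a module over the field $\mathbb{F}[t]/(\pi_j)$, hence completely reducible with all irreducible constituents isomorphic and contained in $\Asoc(L)$, and the no-isomorphic-constituents conclusion of Lemma~\ref{completely-reducible} then gives $\dim Z^*_{\pi_j}=\degr\pi_j$.

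A smaller, related slip in (III): an $\ad a$-invariant subspace of an $\ad S$-isotypic component of $\Z(\Nil(L))$ need not be $\ad S$-invariant, so it is not in general an ideal; the correct version of your ``Schur bookkeeping'' is that the $\ad(S\oplus\mathbb{F}a)$-invariant subspaces of an isotypic component correspond to the $\ad a$-invariant subspaces of $\Hom_S(V,W)$ over the division algebra $\End_S(V)$. Apart from these points, the rest of your argument --- the case skeleton, the treatment of (I) and (II), the Frattini clauses via (\ref{eq:frattini}), and the direct maximal-subalgebra argument for $z\in\varphi(L)$ in place of the paper's citation of Marshall --- is sound and essentially coincides with the paper's proof.
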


\begin{proof}
  From item~(a) in Lemma~\ref{basic-structure}, either $L=L^2$ or $L\neq L^2$. The first possibility gives us $L=L^2=S\oplus \mathcal{J}(L)$, where $\mathcal{J}(L)\neq 0\neq S$ and $\Rad(L)=\Nil(L)=\mathcal{J}(L)$. We also note that $\mathcal{J}(L)^2$ and $\Z(\Nil(L))$ are $\ad S$-modules via the adjoint representation $\ad_{\mathcal{J}(L)}S$. Since $L$ is not semisimple, there is a nonzero $S$-module $M$ such that $\mathcal{J}(L)=\mathcal{J}(L)^2\oplus M$. Then, from $\mathcal{J}(L)=[L,\Rad(L)]$ we get:$$
    \mathcal{J}(L)=[S\oplus\mathcal{J}(L),\mathcal{J}(L)]=[S,\mathcal{J}(L)]+\mathcal{J}(L)^2=\mathcal{J}(L)^2\oplus[S,M].$$Hence $[S,M]=M$, which implies that $M$ has no trivial submodules.
  Consider now $Z=\Z(\Nil(L))$ and note that $C_L(L^2)=C_L(L)=\Z(L)$. Then, the identity $\Z(L)=Z_0=C_L(L^2)$ follows from~(d) in Lemma~\ref{basic-structure}. On the other hand, $Z$ is an $\ad S$-module and the irreducible $\ad S$-submodules of $Z$ are the minimal ideals of $L$, so $Z=\Asoc(L)$ because of~(i) in Lemma~\ref{basic-structure}. In this way, the set $\Sub_\Omega Z$ of subspaces of $Z$ invariant under $\Omega=\ad_Z S$ is a complemented and finite sublattice of $\mathcal{L}_{\id}(L)$. Hence, there are no $\Omega$-isomorphic elements according to Lemma~\ref{completely-reducible}. The assertion on the decomposition of $\mathcal{J}(L)/\mathcal{J}(L)^2$ follows by using the previous argument on the quotient Lie algebra $L_1=L/\mathcal{J}(L)^2$ taking into account that $\Asoc(L_1)=\Z(\Nil(L_1))=\mathcal{J}(L)/\mathcal{J}(L)^2$. Therefore, the assumption $L=L^2$ gives $L$ of type~(I).

  From now on we will describe the case $L\neq L^2 $. If $C_L(L^2)\nsubseteq L^2$, by item~(c) of Lemma~\ref{basic-structure}, we get $L=M\oplus \mathbb{F}z$, $M=L^2$, $M^2=(L^2)^2=M$ and $\Z(L)=\mathbb{F}z$. Moreover, $\mathcal{J}(L)=\mathcal{J}(M)$ by Remark~\ref{jacobsonreduced}. Denote by $\mathcal{J}_0=\mathcal{J}_0(M)=\mathcal{J}_0(L)$ the trivial $\ad S$-submodule of $\mathcal{J}(L)$ and let $x$ be an element of $\mathcal{J}_0$. If there is some $k_0\geq 1$ such that $x\in \mathcal{J}^{k_0}- \mathcal{J}^{k_0+1}$, since $$[x,\mathcal{J}]\subseteq [\mathcal{J},\mathcal{J}^{k_0}]=\mathcal{J}^{k_0+1},$$
  the set $\{\mathbb{F}(\alpha x+z)\oplus \mathcal{J}^{k_0+1}: \alpha\in \mathbb{F}\}$ is an infinite family of ideals of $L$ according to Lemma~\ref{reduction-2}, a contradiction. Hence, $x\in \mathcal{J}^{k}$ for all $k\geq 1$ and therefore $x=0$ because of $\mathcal{J}$ is nilpotent. So $\mathcal{J}_0=0$ and $\mathcal{L}_{\id}(L)$ is a direct product of a $1$-chain and the $\mathcal{L}_{\id}(M)$ according to Lemma~\ref{reduction-2}. Therefore $L$ is of type~(II).

  Otherwise $C_L(L^2)\subseteq L^2$ and $L=L^2\oplus \mathbb{F}a$ because of (a) in Lemma~\ref{basic-structure}. Then, $C_L(L^2) \subseteq L^2\cap \Rad(L)=\mathcal{J}(L)$ and $\Z(\Nil(L)))\subseteq \mathcal{J}(L)$ by applying items (d) and (e) in Lemma~\ref{basic-structure}. Assume firstly that $L$ is solvable, so $L^2=\mathcal{J}(L)\subseteq \Nil(L)$. Let $T$ be a vector space such that $\mathcal{J}(L)=\mathcal{J}(L)^2\oplus T$. Then$$
    \mathcal{J}(L)=L^2=[\mathcal{J}(L)\oplus \mathbb{F}a,\mathcal{J}(L)\oplus \mathbb{F}a]=\mathcal{J}(L)^2+[T,a].$$
  The linear map $\ad a\colon T\to \mathcal{J}(L)$ sets up the equation:
  $$\dim T=\dim [T,a]+\dim C_T(a),$$
  here $C_T(a)=\{t\in T: [t,a]=0\}$. From $\mathcal{J}(L)=\mathcal{J}(L)^2+[T,a]=\mathcal{J}(L)^2\oplus T$, we get $[T,a]\cap \mathcal{J}(L)^2=0$ and $\dim T=\dim [T,a]$. Then $\mathcal{J}(L)=\mathcal{J}(L)^2\oplus [T,a]$ and $\ad_{\mathcal{J}(L)} a$ induces a one-to-one map on $\mathcal{J}(L)/\mathcal{J}(L)^2$. This also implies that $\mathcal{J}(L)=\Nil(L)$ because of $\ad a$ is not nilpotent. The identity $\Z(L)=C_{\Z(\Nil(L))}(a)$ is clear and $\Z(\Nil(L))=C_L(L^2)$ follows from the fact that $C_L(L^2)$ is an abelian ideal according to (d) in Lemma~\ref{basic-structure}. By item (i) of the latter Lemma, the minimal ideals of $L$ are just the $\ad a$-irreducible subspaces of $\Z(\Nil(L))$. Hence $L$ is as in item~(IV), the features of the decomposition of $A=\Asoc(L)$ follow from $\Omega =\mathbb{F}\cdot \ad_A a$ and Lemma~\ref{completely-reducible}.


  We assume finally $C_L(L^2) \subseteq L^2$, thus $C_L(L^2) \subseteq \mathcal{J}(L)$ and $\Z(\Nil(L))\subseteq \mathcal{J}(L)$, and $L=L^2\oplus \mathbb{F}a$ not solvable, so $S\neq 0$. We point out that $C_L(L^2)$ is a trivial $S$-module because of $S\subseteq L^2$. Hence $C_L(L^2)\subseteq \mathcal{J}_0(L)$. Since $L^2=S\oplus \mathcal{J}(L)$ is an $S$-module we can assume w.l.o.g. $a\in C_L(S)$. Then $[S,a]=0$ and $a\notin C_L(L^2)$ yields $[\mathcal{J}(L),a]\neq 0$. Now $\Rad(L)^2=[\mathcal{J}(L)\oplus \mathbb{F}a,\mathcal{J}(L)\oplus \mathbb{F}a]=\mathcal{J}(L)^2+ [\mathcal{J}(L),a]$ and the quotient Lie algebra $M=L/\Rad(L)^2$ is as in item~(II) if $\mathcal{J}(L)\neq \Rad(L)^2$ because of $a+\Rad(L)^2\in C_M(M^2)\nsubseteq M^2$.

  We denote $Z=\Z(\Nil(L))=Z_0\oplus Z_1$ and observe that $Z$ is $\ad (S\oplus \mathbb{F}a)$-invariant. Since $A=\Asoc(L)\subseteq Z$, the minimal abelian ideals of $L$ are exactly the irreducible $\ad (S\oplus \mathbb{F}a)$-subspaces of $Z$. Then, $A=I_1\oplus \dots \oplus I_r$ where each $I_j$ is $\ad (S\oplus \mathbb{F}a)$-irreducible. By setting $\Omega =\ad_A(S\oplus \mathbb{F}a)$, the different $I_j$ are not $\Omega$-isomorphic according to Lemma~\ref{completely-reducible} and $\ad_{A}a$ is a semisimple map. Let $I\in \{I_1,\dots,I_r\}$ be and define $I_{\pi}^*=\{x\in I:\pi(\ad a)(x)=0\}$. Consider now $I=I_{\pi_1}^*\oplus\dots \oplus I_{\pi_r}^*$ the decomposition of $I$ relative to the different irreducible polynomials $\pi_j$ of the minimum polynomial of $\ad a$ (decomposition into primary components of $I$). Since $\ad a$ commutes with $\ad s$ for any $s\in S$, every subspace $I_{\pi_j}^*$ is $\ad S$-invariant. Therefore $I_{\pi_j}^*$ is an ideal of $L$ that decomposes into irreducible $\ad S$-modules. Since $I$ is minimal, there is a unique $\pi_k$ such that $I_{\pi_k}^*\neq 0$, so $I=I_{\pi_k}^*$. In the case $\pi_k(t)=t-\lambda$, the ideal $I$ is $\ad S$-irreducible.

  From (d) in Lemma~\ref{basic-structure}, $\Z(L)\subseteq Z_0\subseteq C_L(L^2)$. Since $[C_L(L^2),L^2]=0$, the equality $\Z(L)=C_L(L^2)\cap \ker \ad a$ is clear. If $\mathcal{J(L)}=\Nil(L)$, the map $\ad a$ is not nilpotent in $\Nil(L)/\Nil(L)^2$ and  $C_L(L^2)\subseteq Z_0$ because of $C_L(L^2)\subseteq \mathcal{J}_0(L)$ and $[C_L(L^2),\mathcal{J}(L)]=0$. Otherwise, $\Nil(L)=\mathcal{J}(L)\oplus \mathbb{F}a=\Rad (L)$, $\ad a$ is nilpotent and $\Z=Z_0\oplus Z_1\subseteq \ker \ad a$. So, $Z_0=\Z(L)$ and any irreducible $\ad S$-module of $Z_1$ is a minimal ideal of $L$. This implies that $\Asoc(L)=\Z(\Nil(L))$. The assumption $\Nil(L)=\Rad(L)$ yields $\varphi(L)=\Rad(L)^2$ according to~\cite{To73}*{Corollary 7.8}. Note that $\Rad(L)^2\neq \mathcal{J}(L)$ because of $\Rad(L)^2\neq 0$ and  $\dim \Rad(L)/\mathcal{J}(L)=1$. Thus $L/\varphi(L)$ is as in item~(II), so $\mathcal{J}_0(L/\varphi(L))=0$ which implies $\mathcal{J}_0(L)\subseteq \varphi(L)$. Therefore $L$ is of type (III).

  It remains for us to prove the two final assertions about $\varphi(L)$. The first one follows from~\cite{To73}*{Theorem 4.8, and Corollaries~7.7 and~7.8}. We then assume that $0\neq \Z(L)=\mathbb{F}z$ and $L$ is not as in (II). For any Lie algebra of types (I), (III) or (IV) $\mathbb{F}z\subseteq \Nil(L)\cap C_L(L^2)\subseteq L^2$ and then $z\in \varphi(L)$ according to~\cite{Ma67}.
\end{proof}

\section {Lie algebras with finite lattice of ideals and trivial Frattini subalgebra}

According to \cite{To73}, any Lie algebra with trivial Frattini subalgebra splits over its abelian socle; the converse also holds. Among the Lie algebras with a finite number of ideals, those with trivial Frattini subalgebra will be described through several equivalent conditions in this section.

\begin{lemma}\label{free-1}
  Let $L$ be a Lie algebra such that $\mathcal{L}_{\id}(L)$ is finite. Then, the following conditions are equivalent:
  \begin{itemize}
    \item [\emph{(a)}]The Frattini subalgebra of $L$ is trivial.

    \item [\emph{(b)}]$\Nil(L)=\Asoc(L)$.

    \item [\emph{(c)}]$\mathcal{J}(L)\subseteq \Asoc(L)$ and $\mathcal{J}(L)\cap \Z(L)=0$.

    \item [\emph{(d)}]Either $\Asoc(L)=\mathcal{J}(L)$ and $L$ is centreless or $\Asoc(L)=\mathcal{J}(L)\oplus \mathbb{F}z$ and $\Z(L)=\mathbb{F}z$.

  \end{itemize}
\end{lemma}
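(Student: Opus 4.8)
The plan is to prove $(a)\Leftrightarrow(b)$ and then close the loop $(b)\Rightarrow(c)\Rightarrow(d)\Rightarrow(b)$, which makes the four statements mutually equivalent. The inputs I would rely on are: the splitting criterion recalled at the beginning of this section (\cite{To73}), that $\varphi(L)=0$ if and only if $L$ splits over $\Asoc(L)$, together with $\Nil(L)^2\subseteq\varphi(L)$ from \eqref{eq:frattini}; Stitzinger's identity $[\Asoc(L),\Nil(L)]=0$ from \cite{St70}; the inclusion $\Z(L)\cap L^2\subseteq\varphi(L)$ (\cite{Ma67}, also extracted in the proof of the last bullet of Proposition~\ref{general-est1}); Weyl's complete reducibility theorem; and items (a), (c), (i) of Lemma~\ref{basic-structure}. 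Recall throughout that $\mathcal{J}(L)\subseteq\Nil(L)$ and $\Asoc(L)\subseteq\Nil(L)$.

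For $(a)\Rightarrow(b)$: since $\Nil(L)^2\subseteq\varphi(L)=0$ the nilradical is abelian, and Towers gives $L=\Asoc(L)\oplus C$ with $C$ a subalgebra. By the modular law $\Nil(L)=\Asoc(L)\oplus N_0$ where $N_0=\Nil(L)\cap C$. The crucial observation is that $N_0$ is an ideal of $L$: it is $\ad C$-invariant because $C$ is a subalgebra and $\Nil(L)$ an ideal, and $\ad\Asoc(L)$-invariant because $[\Asoc(L),\Nil(L)]=0$. Being an abelian ideal contained in $C$, if $N_0\neq 0$ it contains a minimal ideal of $L$, which is abelian (it lies in the nilradical) and hence contained in $\Asoc(L)\cap C=0$, a contradiction; so $N_0=0$ and $\Nil(L)=\Asoc(L)$. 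For $(b)\Rightarrow(a)$ I would exhibit a subalgebra complement of $\Asoc(L)=\Nil(L)$ in $L$ and apply Towers in reverse. Fix a Levi decomposition $L=S\oplus\Rad(L)$. By Lemma~\ref{basic-structure}(a) either $\Rad(L)=\Nil(L)$, and then $S$ itself is the desired complement, or $\dim\Rad(L)/\mathcal{J}(L)=1$, forcing $\Nil(L)$ to be $\mathcal{J}(L)$ or $\Rad(L)$; in the former subcase Weyl's theorem provides $a\in L\setminus L^2$ with $[S,a]=0$, so $S\oplus\mathbb{F}a$ is a subalgebra and, a dimension count plus $\mathbb{F}a\cap L^2=0$ showing it meets $\Nil(L)=\mathcal{J}(L)$ trivially, it complements $\Nil(L)$; the latter subcase is the first alternative again. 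Thus $L$ splits over $\Asoc(L)$ and $\varphi(L)=0$.

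The remaining implications run against the structural dichotomies. For $(b)\Rightarrow(c)$: $\mathcal{J}(L)\subseteq\Nil(L)=\Asoc(L)$ is the first half, and if $0\neq z\in\mathcal{J}(L)\cap\Z(L)$ then $z\in\Z(L)\cap L^2\subseteq\varphi(L)=0$ (using $(a)$), which is absurd. For $(c)\Rightarrow(d)$: substitute $\mathcal{J}(L)\subseteq\Asoc(L)$ into Lemma~\ref{basic-structure}(i); its first alternative becomes $\Asoc(L)=\Z(\Nil(L))=(\Asoc(L)\cap\mathcal{J}(L))\oplus\mathbb{F}z=\mathcal{J}(L)\oplus\mathbb{F}z$ with $\Z(L)=\mathbb{F}z$, and its second alternative forces $\Asoc(L)\subseteq\Z(\Nil(L))\subseteq\mathcal{J}(L)\subseteq\Asoc(L)$, hence $\Asoc(L)=\mathcal{J}(L)$, while $\Z(L)\subseteq\Z(\Nil(L))\subseteq\mathcal{J}(L)$ together with $\mathcal{J}(L)\cap\Z(L)=0$ gives $\Z(L)=0$. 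For $(d)\Rightarrow(b)$: in the centreless branch, if $\Nil(L)\neq\mathcal{J}(L)$ then Lemma~\ref{basic-structure}(a) forces $\Nil(L)=\Rad(L)=\mathcal{J}(L)\oplus\mathbb{F}a$ with $a\notin L^2$ and $S\neq 0$, while $\Asoc(L)=\mathcal{J}(L)\subseteq\Z(\Nil(L))$ yields $[\mathcal{J}(L),a]=0$; choosing $a\in C_L(S)$ by Weyl, $a$ centralizes $L^2=S\oplus\mathcal{J}(L)$, so $a\in\Z(L)$, contradicting centrelessness; hence $\Nil(L)=\mathcal{J}(L)=\Asoc(L)$. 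In the branch $\Z(L)=\mathbb{F}z$ with $\Asoc(L)=\mathcal{J}(L)\oplus\mathbb{F}z$, from $[z,S]=0$ and $z\notin\mathcal{J}(L)$ one gets $z\notin L^2$, so $C_L(L^2)\nsubseteq L^2$ and Lemma~\ref{basic-structure}(c) gives $L=L^2\oplus\mathbb{F}z$ with $M:=L^2$ satisfying $M^2=M$; then $\mathcal{J}(L)=\mathcal{J}(M)=\Nil(M)$ by Lemma~\ref{basic-structure}(a) applied to $M$, and since $\mathbb{F}z$ is a central ideal $\Nil(L)=\Nil(M)\oplus\mathbb{F}z=\mathcal{J}(L)\oplus\mathbb{F}z=\Asoc(L)$ (compare Lemma~\ref{reduction-2} and Remark~\ref{jacobsonreduced}).

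The hard part, I expect, is precisely the equivalence $(a)\Leftrightarrow(b)$: in $(a)\Rightarrow(b)$, recognizing that the remainder $N_0=\Nil(L)\cap C$ of the nilradical after removing a Frattini complement is actually an ideal of $L$ --- which rests entirely on Stitzinger's $[\Asoc(L),\Nil(L)]=0$ and fails for a naive complement --- and, dually, in $(b)\Rightarrow(a)$, arranging that the complement $S\oplus\mathbb{F}a$ of $\Nil(L)$ is at once a subalgebra (forcing $a\in C_L(S)$ via Weyl) and transverse to $\Nil(L)$ (forcing $a\notin L^2$ via Lemma~\ref{basic-structure}(a)). Once $(a)\Leftrightarrow(b)$ is in hand the remaining steps are essentially bookkeeping against Lemma~\ref{basic-structure}.
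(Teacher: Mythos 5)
Your proof is correct, and it reaches the equivalences by a route that genuinely differs from the paper's. The paper runs the single cycle (a)$\Rightarrow$(b)$\Rightarrow$(c)$\Rightarrow$(d)$\Rightarrow$(a): it obtains (a)$\Rightarrow$(b) by quoting Towers' Theorem~7.3 outright ($\varphi(L)=0$ implies $\Nil(L)=\Asoc(L)$), and it closes the loop with (d)$\Rightarrow$(a) by extracting a subalgebra complement of $\Asoc(L)$ from the case analysis of Proposition~\ref{general-est1} and invoking the splitting criterion. You instead establish (a)$\Leftrightarrow$(b) first and then close (b)$\Rightarrow$(c)$\Rightarrow$(d)$\Rightarrow$(b). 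Your (a)$\Rightarrow$(b) re-derives the Towers identity from the splitting $L=\Asoc(L)\oplus C$ together with $\Nil(L)^2\subseteq\varphi(L)$ and Stitzinger's $[\Asoc(L),\Nil(L)]=0$; the observation that $N_0=\Nil(L)\cap C$ is an ideal of $L$ is exactly the point that makes this work. Your (b)$\Rightarrow$(a) builds the complement $S$ or $S\oplus\mathbb{F}a$ directly from Lemma~\ref{basic-structure}(a) and Weyl's theorem, bypassing Proposition~\ref{general-est1} altogether, and your (d)$\Rightarrow$(b) is a purely structural check rather than a second splitting construction. What each approach buys: your ordering makes the appeal to $\Z(L)\cap L^2\subseteq\varphi(L)=0$ inside (b)$\Rightarrow$(c) unambiguously legitimate, since (b)$\Rightarrow$(a) is already in hand, whereas in the paper's linear chain that step is licensed only by reading Towers~7.3 as the full equivalence; and your argument is essentially self-contained modulo the splitting criterion. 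The paper's version is considerably shorter because it leans on the citation and on the structure theory it has already developed.
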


\begin{proof}
  If $\varphi(L)=0$, the abelian socle and the nilradical are equals (see~\cite{To73}*{Theorem 7.3}) and therefore (b) holds. Now, if we assume $\Nil(L)=\Asoc(L)$, clearly $\mathcal{J}(L)\subseteq \Asoc(L)$. Moreover, $\mathcal{J}(L)\cap \Z(L)\subseteq L^2\cap \Z(L)\subseteq \varphi(L)=0$ according to~\cite{Ma67}. Hence (c) follows. The assertion (i) of Lemma~\ref{basic-structure} gives us (d) from (c). Finally, from the structure results on Lie algebras of a finite number of ideals stablished in Proposition~\ref{general-est1}, it is easily checked that there is a subalgebra $M$ such that $M\oplus \Asoc(L)$ if (d) is assumed. So $L$ splits over its abelian socle and therefore $\varphi(L)=0$ by applying~\cite{To73}*{Theorem 7.3}.
\end{proof}

\begin{theorem}\label{free-2}
  Any Lie algebra $L$ with $\mathcal{L}_{\id}(L)$ finite and trivial Frattini subalgebra is of one of the following types:
  \begin{itemize}
    \item [\emph{\textbf{ A.} }] Either $0$ or $\mathbb{F}z$.

    \item [\emph{\textbf{ B-I.}}] $L=S\oplus A$, where $S$ is a nonzero semisimple algebra, $\Ssoc(L)=0$ and $A$ is a nonzero abelian ideal that decomposes as a direct sum of $k\geq 1$ non trivial and non isomorphic irreducible $S$-modules. In this case, $L$ is centreless, $A=\Asoc(L)=\Rad(L)$ and the number of solvable ideals of $L$ is $2^k$. Moreover the number of ideals of $L$, $\mathbf{n}_{\id}(L)$, satisfies:$$
            2^l+2^k-1\leq \mathbf{n}_{\id}(L)\leq 2^l\cdot 2^k+3-(2^l+2^k),
          $$where $l\geq 1$ is the number of simple components of $S$.

    \item [\emph{\textbf{ B-II.}}] $L=M \oplus \mathbb{F}z$, $[z,M]=0$, $\Ssoc(L)=0$ and $M$ as in item \emph{\textbf{B-I}}. In this case, $\mathcal{L}_{\id}(L)= \mathcal{L}_{\id}(M)\times\mathcal{L}_{\id}(\mathbb{F}z)$ and $\mathbf{n}_{\id}(L)=2\cdot \mathbf{n}_{\id}(M)$.

    \item [\emph{\textbf{ B-III.}}]$L=S \oplus A\oplus\mathbb{F}a$ is a centreless Lie algebra, $S$ is a nonzero semisimple subalgebra, $\Ssoc(L)=0$, $[S,a]=0$ and $A$ is a nonzero abelian ideal. We also have that $\ad_A a$ is a semisimple map with minimal polynomial $p_m(t)=\pi_1(t)\cdot \dots \cdot \pi_r(t)\neq t$, where $\pi_j$ are distinct irreducible polynomials. Moreover, the subspaces $A_0=\{x\in A: [S,x]=0\}$ and the sum of all non trivial $\ad S$-modules of $A$, denoted by $A_1$, are $\ad_A a$-invariant. In addition, $A_1\neq 0$ and the decomposition into primary components $(A_i)_{\pi_j}=\{x\in A_i:\pi_j(\ad a)(x)=0\}$,$$A=A_0\oplus A_1=(A_0)_{\pi_1}\oplus \dots\oplus (A_0)_{\pi_r}\oplus (A_1)_{\pi_1}\oplus \dots\oplus (A_1)_{\pi_r},$$satisfies:
          \begin{itemize}
            \item[$\circ$] Each nonzero component $(A_0)_{\pi_j}$ is a minimal ideal and $\dim (A_0)_{\pi_j}=\deg \pi_j$;
            \item[$\circ$] $(A_0)_t=0$ and $(A_1)_t$ decomposes as $k_0\geq 0$ irreducible and non isomorphic $\ad S$-modules;
            \item[$\circ$] each nonzero component $(A_1)_{\pi_j}$ decomposes into $k_j\geq 1$ irreducible $\ad (S\oplus \mathbb{F}a)$-subspaces which are non isomorphic.

          \end{itemize}
          In this case, $A_0$ decomposes into $0\le s\le r$ minimal ideals, $A_1$ decomposes into $k=k_0+k_{i_1}+\dots +k_{i_j}\geq 1$ minimal ideals and $L$ has exactly $2^{s+k}$ abelian ideals. Moreover:$$
            2^{l+1}+2^{s+k}+2^{k_0}-2\leq \mathbf{n}_{\id}(L)\leq (2^l-1)\cdot(2^{s+k}+2^{k_0}+2^s-1)+2,
          $$where $l\geq 1$ is the number of simple components of $S$.

    \item [\emph{\textbf{ C.}}] $L=A\oplus \mathbb{F}a$, where $A$ is a nonzero abelian ideal, $\ad_A a$ is a bijective and semisimple map with minimum polynomial equal to the characteristic one. In particular,$$A=(A)_{\pi_1}\oplus \dots\oplus (A)_{\pi_r},$$where $\pi_j(t)\neq t$ are the distinct irreducible polynomials of the minimum polynomial of $\ad_A a$ and $\dim (A)_{\pi_j}=\deg \pi_j$. In this case, $\mathbf{n}_{\id}(L)=2^r+1$.

    \item [\emph{\textbf{ D.}}] $L=M_s\oplus L^*$ is a direct sum as idels of a nonzero semisimple  Lie algebra $M_s$ and any Lie algebra $L^*$ as in items \emph{\textbf{A}, \textbf{B-I}, \textbf{B-II}, \textbf{B-III}} or \emph{\textbf{C}}. Hence, $\mathcal{L}_{\id}(L)=\mathcal{L}_{\id}(M_s)\times \mathcal{L}_{\id}(L^*)$ and $n_{\id}(L)=2^p\cdot n_{\id}(L^*)$, where $p\geq 1$ the number of simple components of $M_s$.

  \end{itemize}
\end{theorem}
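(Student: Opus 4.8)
The plan is to reduce the classification to the structure theorems already proved — Corollary~\ref{reduction-1}, Corollary~\ref{cor-booleano}, Proposition~\ref{general-est1} and Lemma~\ref{free-1} — and then to read off the ideal counts by elementary combinatorics based on Lemma~\ref{lema-reduction-1}. First I would peel off the semisimple socle: write $L=\Ssoc(L)\oplus L^*$ with $L^*=S'\oplus\Rad(L)$ the faithful subalgebra described before Lemma~\ref{lema-reduction-1}. Since $\Ssoc(L)$ is a sum of simple ideals, $\Nil(L)=\Nil(L^*)$, and since $[\Ssoc(L),L^*]=0$ the minimal abelian ideals of $L$ and of $L^*$ coincide, so $\Asoc(L)=\Asoc(L^*)$; also $\mathcal{L}_{\id}(L)$ is finite iff $\mathcal{L}_{\id}(L^*)$ is, by Corollary~\ref{reduction-1}. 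Hence Lemma~\ref{free-1} gives $\varphi(L)=0\iff\Nil(L)=\Asoc(L)\iff\Nil(L^*)=\Asoc(L^*)\iff\varphi(L^*)=0$. Thus if $\Ssoc(L)\neq0$ we land in case \textbf{D} with $M_s=\Ssoc(L)$, once $L^*$ has been classified, and $\mathcal{L}_{\id}(L)=\mathcal{L}_{\id}(M_s)\times\mathcal{L}_{\id}(L^*)$ with $\mathcal{L}_{\id}(M_s)\cong Q_p$ by Corollary~\ref{cor-booleano}, whence $\mathbf{n}_{\id}(L)=2^p\,\mathbf{n}_{\id}(L^*)$. It remains to treat $L$ faithful with $\varphi(L)=0$.

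If $\mathcal{J}(L)=0$, then $L$ is reductive, so $L=S\oplus\Z(L)$ by Corollary~\ref{cor-booleano}; faithfulness forbids simple ideals, forcing $S=0$, and $\dim\Z(L)\leq1$ by Lemma~\ref{basic-structure}(b), so $L=0$ or $L=\mathbb{F}z$, case \textbf{A}. Assume now $\mathcal{J}(L)\neq0$ and place $L$ in one of the types (I)--(IV) of Proposition~\ref{general-est1}. By (\ref{eq:frattini}), $\varphi(L)=0$ forces $\Nil(L)^2\subseteq\varphi(L)=0$, so $\Nil(L)$ — hence $\mathcal{J}(L)$ — is abelian, and $\Nil(L)=\Asoc(L)$ by Lemma~\ref{free-1}. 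This already rules out the subcase of type (III) with $\Rad(L)=\Nil(L)$, since there $\Nil(L)^2=\Rad(L)^2=\mathcal{J}(L)^2+[\mathcal{J}(L),a]\supseteq[\mathcal{J}(L),a]\neq0$; so in case (III) we must have $\Nil(L)=\mathcal{J}(L)$.

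Next I would match the surviving configurations with the list. In type (I), $L=S\oplus\mathcal{J}(L)$ with $A:=\mathcal{J}(L)=\Nil(L)=\Asoc(L)=\Z(\Nil(L))$ abelian; since $\mathcal{J}(L)^2=0$, Proposition~\ref{general-est1}(I) says $A$ is a sum of nontrivial, pairwise non-isomorphic irreducible $\ad S$-modules and $\Z(L)=Z_0(\Nil(L))=0$: this is case \textbf{B-I}. In type (II), $L=M\oplus\mathbb{F}z$ with $M$ of type (I) and $\mathcal{J}(M)^2=0$, so $M$ is of type \textbf{B-I} and $L$ of type \textbf{B-II}, with $\mathcal{L}_{\id}(L)=\mathcal{L}_{\id}(M)\times\mathcal{L}_{\id}(\mathbb{F}z)$. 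In type (III) with $\Nil(L)=\mathcal{J}(L)$, set $A:=\mathcal{J}(L)=\Asoc(L)=\Z(\Nil(L))$; then $L=S\oplus A\oplus\mathbb{F}a$, $S\neq0$, $[S,a]=0$, $[A,a]\neq0$, $\ad_A a$ is semisimple by Proposition~\ref{general-est1}(III), and $\Z(L)=0$ because otherwise $\Z(L)=\mathbb{F}z\subseteq\varphi(L)=0$ by the last bullet of that proposition; the minimal polynomial $p_m$ of $\ad_A a$ is a product of distinct irreducibles ($\ad_A a$ semisimple) and $p_m\neq t$ (as $[A,a]\neq0$), while $A_1\neq0$ since $A_1=0$ would make $S$ centralize $\Rad(L)$ and hence contribute simple ideals, contradicting $\Ssoc(L)=0$. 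The splittings $A=A_0\oplus A_1$ and the primary splittings $(A_i)_{\pi_j}$ are $\ad_A a$-invariant because $[S,a]=0$, and applying Lemma~\ref{completely-reducible} to $\Omega=\ad_A(S\oplus\mathbb{F}a)$ (a finite complemented sublattice of $\mathcal{L}_{\id}(L)$, as $A$ is a sum of minimal abelian ideals) forces the pieces of $A$ to be pairwise non-$\Omega$-isomorphic; this yields that each nonzero $(A_0)_{\pi_j}$ is a single minimal ideal with $\dim=\deg\pi_j$, that $(A_0)_t=C_{A_0}(a)\subseteq\Z(L)=0$, and the stated decompositions of $(A_1)_t$ into $k_0$ non-isomorphic $\ad S$-modules and of each $(A_1)_{\pi_j}$ into $k_j$ non-isomorphic $\ad(S\oplus\mathbb{F}a)$-subspaces: this is case \textbf{B-III}. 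In type (IV), $L=A\oplus\mathbb{F}a$ solvable with $A:=\mathcal{J}(L)=\Nil(L)=\Asoc(L)$ abelian; $\ad_A a$ is one-to-one by Proposition~\ref{general-est1}(IV) and $\mathcal{J}(L)^2=0$, hence bijective, and it is semisimple because $\Asoc(L)$ equals the sum of the irreducible $\ad_A a$-submodules of $A$, which is all of $A$ only when $\ad_A a$ is semisimple; bijectivity gives $t\nmid p_m$, so $\Z(L)=C_{\Z(\Nil(L))}(a)=0$ and each primary component $(A)_{\pi_j}$ is cyclic of dimension $\deg\pi_j$, so $p_m$ is the characteristic polynomial: this is case \textbf{C}.

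Finally I would compute the ideal counts. By Lemma~\ref{lema-reduction-1}, every ideal of a type \textbf{B-I} algebra $L=S\oplus A$ is $B\oplus I^*$ with $B$ a sum of simple components of $S$ and $I^*$ an $\ad S$-submodule of $A$ subject to $[B,A]\subseteq I^*$; there are $2^l$ choices of $B$ and $2^k$ of $I^*$ (Lemma~\ref{completely-reducible}), the pairs $(0,I^*)$ are always admissible and give the $2^k$ solvable ideals, and the pairs $(B,A)$ are always admissible, giving $\mathbf{n}_{\id}(L)\geq2^k+2^l-1$; for the upper bound one discards at least the inadmissible pairs $(S,I^*)$ with $I^*\neq A$ (since $[S,A]=A$) and $(B,0)$ with $B\neq0$ (since $A$ has no trivial constituent), which overlap only in $(S,0)$, giving $\mathbf{n}_{\id}(L)\leq2^l2^k-(2^k-1)-(2^l-1)+1=2^l2^k+3-(2^l+2^k)$. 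For \textbf{B-II} and \textbf{D} the direct-product descriptions give $2\,\mathbf{n}_{\id}(M)$ and $2^p\,\mathbf{n}_{\id}(L^*)$. For \textbf{C}, an ideal $I$ with nonzero $\mathbb{F}a$-component satisfies $[I,A]=[a,A]=A$, so $A\subseteq I$ and $I=L$; every other ideal is an $\ad_A a$-submodule of $A$, and the $r$ cyclic pairwise non-isomorphic primary components give $2^r$ of them, so $\mathbf{n}_{\id}(L)=2^r+1$. The count for \textbf{B-III} is the main computational obstacle: one applies Lemma~\ref{lema-reduction-1} to $L=S\oplus\Rad(L)$, letting $B$ range over the $2^l$ sums of simple components and $I^*$ over the ideals of $\Rad(L)=A\oplus\mathbb{F}a$ — by the bookkeeping of Lemma~\ref{reduction-2} these are the $2^{s+k}$ submodules of $A$ together with the subspaces $A'\oplus\mathbb{F}(x+a)$ with $x$ in the trivial part $A_0$ — and then tracks the constraint $[B,\Rad(L)]\subseteq I^*$ together with the roles of the $2^s$ submodules of $A_0$ and the $k_0$ constituents of $(A_1)_t$; carrying this out by inclusion–exclusion as in case \textbf{B-I} yields $2^{l+1}+2^{s+k}+2^{k_0}-2\leq\mathbf{n}_{\id}(L)\leq(2^l-1)(2^{s+k}+2^{k_0}+2^s-1)+2$. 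Reversing these arguments, together with Lemma~\ref{free-1} and Lemma~\ref{lema-reduction-1}, shows conversely that each algebra of types \textbf{A}--\textbf{D} has a finite lattice of ideals and trivial Frattini subalgebra.
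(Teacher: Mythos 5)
Your proposal is correct and follows essentially the same route as the paper: reduce via Corollary~\ref{reduction-1} and Lemma~\ref{free-1} to the faithful case, match the four types of Proposition~\ref{general-est1} to \textbf{B-I}--\textbf{C}, and count ideals with Lemma~\ref{lema-reduction-1}. The only differences are stylistic --- you peel off $\Ssoc(L)$ at the start rather than the end, and you make explicit via $\Nil(L)^2\subseteq\varphi(L)=0$ why the subcase of type (III) with $\Rad(L)=\Nil(L)$ cannot occur --- while your \textbf{B-III} bound is asserted rather than derived, which is no less detailed than the paper's own treatment of that count.
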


\begin{proof}
  If $\mathcal{J}(L)= 0$, by Remark~\ref{equivalence reductive} we have $\Nil(L)=Z(L)=\Asoc(L)=\Rad(L)$ of dimension up to $1$ because of $\mathcal{L}_{\id}(L)$. Then, $L$ splits over its abelian socle and therefore $\varphi(L)=0$, and $L$ is as in item \textbf{A} if $\Ssoc(L)=0$. Assume then $\varphi(L)=0$, $\mathcal{J}(L)\neq 0$ and $\Ssoc(L)=0$. According to Lemma~\ref{free-1}, $\mathcal{J}(L)\subseteq \Asoc(L)$ and $\mathcal{J}(L)\cap \Z(L)=0$. Then, Proposition~\ref{general-est1} gives us the following possibilities:
  \begin{itemize}

    \item[$\bullet$] $L=S\oplus \mathcal{J}(L)$ is as in item~(I). Then $L$ is centreless and $A=\Asoc(L)=\mathcal{J}(L)$. In fact, the minimal abelian ideals are the irreducible $\ad S$-modules of $\mathcal{J}(L)$, which are non trivial and non isomorphic. Hence $L$ is as in item \textbf{B-I}. The lower bound on the number of ideals is clear. For the upper bound, we use Lemma~\ref{lema-reduction-1}. The ideals of $L$ are of the form $S'\oplus I$, where $I\subseteq \mathcal{J}(L)$ is a sum of minimal ideals and $S'$ is a semisimple subalgebra of $S$ such that $[S',\mathcal{J}(L)]\subseteq I$. So, at most there are $2^l\cdot 2^k$ ideals. Now, $I=0$ implies $S'=0$ because $\Ssoc(L)=0$, and if we take $S'=S$, then $I=\mathcal{J}(L)$ is the only possibility.

    \item[$\bullet$] $L$ is as in item~(II). Then $\Asoc(L)=\mathcal{J}(L)\oplus \mathbb{F}z$ and $L$ is as item \textbf{B-II}.

    \item[$\bullet$] $L$ is as in item (III). In this case $\Z(L)=0$ and $A=\Asoc(L)=\mathcal{J}(L)=\Nil(L)=\Z(\Nil(L))$. So $L=S\oplus A\oplus \mathbb{F}a$, $[S,a]=0$ and $\ad_A a$ is a semisimple and non nilpotent map. Let $p_m(t)=\pi_1(t)\cdot \dots \cdot \pi_r(t)\neq t$ be the decomposition of the minimum polynomial of $\ad_A a$ into distinct irreducible $\pi_j$, $A_0$ the trivial $\ad S$-module of $A$ and $A_1$ the sum of all nontrivial $\ad S$-modules. The commutativity of $\ad_A a$ and $\ad_A s$ for any $s\in S$ implies that $A_0=\mathcal{J}_0(L)$ and $A_1=\mathcal{J}_1(L)$ are $\ad_A a$-invariant. So we can consider the primary decomposition, $(A_i)_{\pi_j}=\{x\in A_i:\pi_j(\ad a)(x)=0\}$:
          \begin{itemize}
            \item[$\circ$]$A_0=(A_0)_{\pi_1}\oplus \dots\oplus (A_0)_{\pi_r}$, and
            \item[$\circ$]$A_1=(A_1)_{\pi_1}\oplus \dots\oplus (A_1)_{\pi_r}$.
          \end{itemize}

    \item[] The ideals inside $A_0$ are just the $\ad a$-invariant subspaces of $A_0$, so $\ad_A a$ acts cyclically on $(A_0)_{\pi_j}$, and therefore $\dim (A_0)_{\pi_j}=\deg \pi_j$. If $\ad_A a$ is not one-to one, we can assume w.l.o.g. $\pi_1=t$. In this case, $(A_0)_{\pi_1}=\Z(L)=0$ and $(A_1)_{\pi_1}$ is an ideal that decomposes as a finite number of irreducible and non isomorphic $\ad S$-modules. For any ${\pi_j}\neq t$, the subspace $(A_1)_{\pi_j}$ decomposes into a sum of irreducible and non isomorphic $\ad (S \oplus \mathbb{F}a)$-subspaces. Hence $L$ is of type \textbf{B-III}. According to Lemma~\ref{lema-reduction-1}, any ideal of $L$ decomposes as  $S'\oplus I_R$, where $S'$ is any subalgebra of $S$ such that $[S',A]=[S',A_1]\subseteq I_R$ and $I_R$ is any ideal of $L$ inside $A\oplus \mathbb{F}a$. Similar arguments to those given in the proof of Lemma~\ref{reduction-2} yield the following possibilities for the ideals of $L$:

          \begin{itemize}
            \item[(a)] $S'\oplus I_{\mathcal{J}}$ where $I_R=I_{\mathcal{J}}$ is an ideal of $L$ inside $\mathcal{J}(L)=A_0\oplus A_1$. Since $\mathcal{J}(L)^2=0$, $I_{\mathcal{J}}$ is just any $\ad (S\oplus \mathbb{F}a)$-invariant subspace of $\mathcal{J}(L)$. Note that in the case $S'\neq 0$, we have $[S', A_1]\neq 0$ because of $\Ssoc(L)=0$. In fact, $S=S'$ give us $A_1\subseteq I_{\mathcal{J}}$.
            \item[(b)] $S'\oplus (I_{\mathcal{J}} \oplus \mathbb{F}(x+a))$, with $I_{\mathcal{J}}$ as in item~(a) and $x\in A_0$ and $[a, \mathcal{J}(L)]\subseteq I_{\mathcal{J}}$. In this case, $A_0\oplus (\oplus_{\pi_j\neq t} (A_1)_{\pi_j})\subseteq I_{\mathcal{J}}$ and therefore $x=0$. Hence, $I_R=I_{\mathcal{J}} \oplus \mathbb{F}a$.
          \end{itemize}

          From (a) and (b), it is clear that Lie algebras of type \textbf{B-III} have a finite number of ideals. As upper bound we can provide $2^l\cdot(2^{s+k}+2^{k_0})$. Here $k_0$ is the number of irreducible $\ad S$-modules in the decomposition of $(A_1)_t$. The lower bound is clear.
    \item[$\bullet$] $L$ is as in (IV). Then $\Asoc(L)=\mathcal{J}(L)$ yields to the algebras of type \textbf{C}.
  \end{itemize}
  Finally, $\varphi(L)=0$ and $\Ssoc(L)\neq 0$ provides the Lie algebras of type \textbf{D}.
\end{proof}

Our final example displays the algebraic structure and the Hasse diagram of the lattice of ideals of Lie algebras with trivial Frattini subalgebra and number of ideals $\leq 10$.
\begin{example}The following list exhausts all the Lie algebras up to $10$ ideals and trivial Fattini subalgebra. The decomposition $V\oplus W$ will represent a direct sum as vector spaces and $V\boxplus W$ a direct sum as ideals. So, $V$ and $W$ will be Lie algebras in the latter case.
  \begin{figure}
    \centering
    \includegraphics[width=0.8\textwidth]{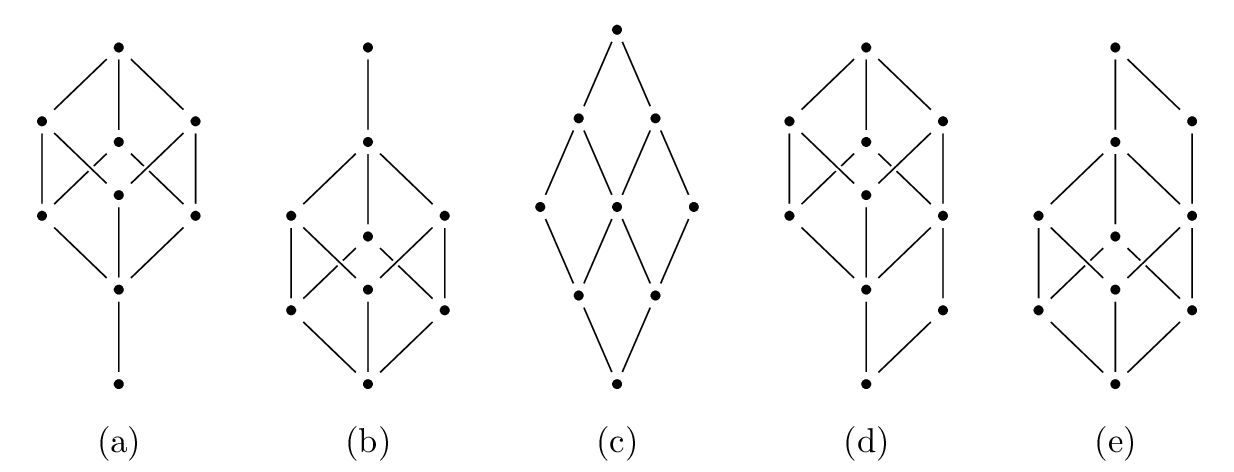}
    \caption{Some Hasse diagrams with more than 8 nodes.}
    \label{fig:HasseMore}
  \end{figure}
  \begin{itemize}
    \item[\emph{1.1}] \textbf{A}-type: \emph{$0$ and $\mathbb{F}z$. Hasse diagrams h1.1 and h2.1 in Figure~\ref{fig:allHasse}. }

    \item[\emph{1.2}] \textbf{A$\,\oplus\,$D}-type: $S_i$  simple Lie algebras.

          \begin{itemize}
            \item[\emph{1.2.1}] \emph{Any simple Lie algebra. Hasse diagram h2.1 in Figure~\ref{fig:allHasse}.} 

            \item[\emph{1.2.2}] \emph{$S_1\boxplus S_2$ and $\mathbb{F}z\boxplus S_1$.  Hasse diagram h4.2 in Figure~\ref{fig:allHasse}.}

            \item[\emph{1.2.3}] \emph{$S_1\boxplus S_2\boxplus S_3$ and $\mathbb{F}z\boxplus S_1\boxplus S_2$. Hasse diagram h8.15 in Figure~\ref{fig:allHasse}.}

          \end{itemize}
    \item[\emph{2.1}] \textbf{B-I}-type: $S_i$ simple and $V_{i,j}$ irreducible, non trivial and non isomorphic $S_i$-modules and $\mathbb{F}$ denotes the trivial $S_i$-module.
          \begin{itemize}
            \item[\emph{2.1.1}] \emph{$S_1\oplus V_{1,1}$, $S_1\oplus (V_{1,1}\oplus V_{1,2})$ and $S_1\oplus (V_{1,1}\oplus V_{1,2} \oplus V_{1,3})$. Hasse diagrams h3.1 and h5.3 in Figure~\ref{fig:allHasse} and (b) in Figure~\ref{fig:HasseMore}.}

            \item[\emph{2.1.2}] \emph{$S_1\oplus S_2 \oplus (V_{1,1}\otimes V_{2,1})$, $S_1\oplus S_2 \oplus (V_{1,2}\otimes V_{2,2}\oplus V_{2,1}\otimes V_{2,2})$. Hasse diagrams h5.2 and h7.8 in Figure~\ref{fig:allHasse}.} 

            \item[\emph{2.1.3}] \emph{$S_1\oplus S_2 \oplus (V_{1,1}\otimes \mathbb{F} \oplus V_{1,2}\otimes V_{2,2})$ and $S_1\oplus S_2 \oplus (V_{1,1}\otimes \mathbb{F} \oplus \mathbb{F}\otimes V_{2,2})$. Hasse diagrams h8.13 in Figure~\ref{fig:allHasse} and (c) in Figure~\ref{fig:HasseMore}.}
            \item[\emph{2.1.4}] \emph{$S_1\oplus S_2 \oplus S_3 \oplus V_{1,1}\otimes V_{2,1} \otimes V_{3,1}$. Hasse diagram (a) in Figure~\ref{fig:HasseMore}.}

          \end{itemize}

    \item[\emph{2.2}] \textbf{B-I$\,\boxplus\,$D}-type: \emph{$S\boxplus (S_1\oplus V_{1,1})$, $S\boxplus (S_1\oplus (V_{1,1}\oplus V_{1,2}))$ and $S\boxplus (S_1\oplus S_2 \oplus V_{1,1}\otimes V_{2,1})$, where $S$ is a simple ideal. Hasse diagrams h6.5 in Figure~\ref{fig:allHasse} and (e) and (d) in Figure~\ref{fig:HasseMore}.}

    \item[\emph{3.1}] \textbf{B-II}-type: \emph{$\mathbb{F}z\boxplus (S_1\oplus V_{1,1})$, $\mathbb{F}z\boxplus (S_1\oplus (V_{1,1}\oplus V_{1,2}))$ and $\mathbb{F}z\boxplus (S_1\oplus S_2 \oplus V_{1,1}\otimes V_{2,1})$. Hasse diagrams h6.5 in Figure~\ref{fig:allHasse} and (e) and (d) in Figure~\ref{fig:HasseMore}.}

    \item[\emph{3.2}] \textbf{B-II$\,\boxplus\,$D}-type: \emph{More than $10$ ideals.}

    \item[\emph{4.1}] \textbf{B-III}-type: Here $n$-$\ad(S\oplus \mathbb{F}a)$-irreducible means that the number of irreducible and non isomorphic $\ad(S\oplus \mathbb{F}a)$-subspaces is $n$.
          \begin{itemize}
            \item[\emph{4.1.1}] \emph{$S_1\oplus S_2 \oplus A_1\oplus \mathbb{F}a$, where $A_1=(A_1)_{\pi_1}$ is $1$-$\ad(S\oplus \mathbb{F}a)$-irreducible and $\pi_1(t)\neq t$. Hasse diagram (a) in Figure~\ref{fig:HasseMore}.}

            \item[\emph{4.1.2}] \emph{$S_1\oplus A_1\oplus \mathbb{F}a$, where $A_1=(A_1)_{\pi_1}$ is $n$-$\ad(S\oplus \mathbb{F}a)$-irreducible, $n=1,2$, and $\pi_1(t)\neq t$. Hasse diagrams h5.2 and h7.8 in Figure~\ref{fig:allHasse}.}

            \item[\emph{4.1.3}] \emph{$S_1\oplus A_1\oplus \mathbb{F}a$, where $A_1=(A_1)_{\pi_1}\oplus (A_1)_{\pi_2}$ and each $(A_1)_{\pi_j}$ is $1$-$\ad(S\oplus \mathbb{F}a)$-irreducible and $\pi_j(t)\neq t$. Hasse diagram h7.8 in Figure~\ref{fig:allHasse}.}

            \item[\emph{4.1.4}] \emph{$S_1\oplus A_1\oplus \mathbb{F}a$, where $A_1=(A_1)_{t}\oplus (A_1)_{\pi_1}$ and $\pi_1(t)\neq t$, $(A_1)_{t}$ is $\ad S$-irreducible and $(A_1)_{\pi_1}$ $1$-$\ad(S\oplus \mathbb{F}a)$-irreducible. Hasse diagram h8.13 in Figure~\ref{fig:allHasse}. }

            \item[\emph{4.1.5}] \emph{$S_1\oplus A_0\oplus A_1\oplus \mathbb{F}a$, where $A_0=(A_0)_{\pi_1}$ is $\ad a$-irreducible and $(A_1)_{\pi_2}$ is $1$-$\ad(S\oplus \mathbb{F}a)$-irreducible and $\pi_j(t )\neq t$. Hasse diagram h8.13 in Figure~\ref{fig:allHasse}.}
            \item[\emph{4.1.6}] \emph{$S_1\oplus A_0\oplus A_1\oplus \mathbb{F}a$, where $A_0=(A_0)_{\pi_1}$ is $\ad a$-irreducible and $(A_1)_t$ is $\ad S$-irreducible and $\pi_1(t)\neq t$. Hasse diagram (c) in Figure~\ref{fig:HasseMore}.}
          \end{itemize}
    \item[\emph{4.2}] \textbf{B-III$\,\boxplus\,$D}-type: \emph{$S\boxplus (S_1\oplus A_1\oplus \mathbb{F}a)$, where $A_1=(A_1)_{\pi_1}$ is $1$-$\ad(S\oplus \mathbb{F}a)$-irreducible, $\pi_1(t)\neq t$ and $S$ is a simple ideal. Hasse diagram (d) in Figure~\ref{fig:HasseMore}.}

    \item[\emph{5.1}] \textbf{C}-type: \emph{Split extensions $L(V,f,\pi,n)=V\oplus \mathbb{F} f$, where $V$ is a vector space and $f\colon V\to V$ a semisimple cyclic linear map with minimum polynomial either irreducible $\pi=\pi_1(t)$ ($n=1$) or product of two distinct irreducible, $\pi=\pi_1(t)\pi_2(t)$ ($n=2$), or three distinct irreducible $\pi=\pi_1(t)\pi_2(t)\pi_3(t)$ ($n=3$) and $\pi_j(t)\neq t$. Hasse diagrams h3.1 and h5.3 in Figure~\ref{fig:allHasse} and (b) in Figure~\ref{fig:HasseMore}.}

    \item[\emph{5.2}] \textbf{C$\,\boxplus\,$D}-type: \emph{$S\boxplus L(V,f,\pi,1)$, $S\boxplus L(V,f,\pi,2)$. Hasse diagrams h6.5 in Figure~\ref{fig:allHasse} and (e) in Figure~\ref{fig:HasseMore}.}\end{itemize}
\end{example}

\bibliography{main}

\end{document}